\newcommand{\seqnum}[1]{\href{http://oeis.org/#1}{\underline{#1}}}
\tikzset{->-/.style={decoration={
  markings,
  mark=at position .5 with {\arrow{>}}},postaction={decorate}}}
\newcolumntype{?}{!{\vrule width 3pt}}
\newtheorem{theorem}{Theorem}[section]
\newtheorem{lemma}[theorem]{Lemma}
\newtheorem{corollary}[theorem]{Corollary}
\newtheorem{proposition}[theorem]{Proposition}
\title{Raised $k$-Dyck Paths}
\author{
Paul Drube\\
\footnotesize Valparaiso University\\[-0.8ex] 
\footnotesize Valparaiso, Indiana, U.S.A. \\[-0.8ex]
\footnotesize\tt paul.drube@valpo.edu
}
\begin{document}

\maketitle

\begin{abstract}
Raised $k$-Dyck paths are a generalization of $k$-Dyck paths that may both begin and end at a nonzero height.  In this paper, we develop closed formulas for the number of raised $k$-Dyck paths from $(0,\alpha)$ to $(\ell,\beta)$ for all height pairs $\alpha,\beta \geq 0$, all lengths $\ell \geq 0$, and all $k \geq 2$.  We then enumerate raised $k$-Dyck paths with a fixed number of returns to ground, a fixed minimum height, and a fixed maximum height, presenting generating functions (in terms of the generating functions $C_k(t)$ for the $k$-Catalan numbers) when closed formulas aren't tractable.  Specializing our results to $k=2$ or to $\alpha < k$ reveal connections with preexisting results concerning height-bounded Dyck paths and ``Dyck paths with a negative boundary", respectively.
\end{abstract}

\section{Introduction}
\label{sec: intro}

For any $k \geq 2$, a $k$-Dyck path of length $\ell$ and height $h$ is an integer lattice path from $(0,0)$ to $(\ell,h)$ that uses steps $\lbrace U = (1,1), D = (1,1-k)) \rbrace$ and stays weakly above the line $y=0$.  One may verify that the terminal point of any $k$-Dyck path must satisfy $\ell = h \bmod k$.  Thus we restrict our attention to $k$-Dyck paths of length $kn+h$ and height $h$, denoting the collection of all $k$-Dyck paths of length $kn+h$ and height $h$ by $\mathcal{D}^k_{n,h}$.\footnote{$k$-Dyck paths of length $kn$ and height $km$ are often referred to as $k$-Dyck paths of ``semi-length" $n$ and ``semi-height" $m$, with $\mathcal{D}^k_{n,m}$ also sometimes being used to refer to such paths.}

It is well known that $k$-Dyck paths of length $kn$ and height $0$ are enumerated by the $k$-Catalan numbers (or Fuss-Catalan numbers), a one-parameter generalization of the Catalan numbers given by $C_n^k = \frac{1}{kn+1} \binom{kn+1}{n}$ for all $k \geq 2$ and $n \geq 0$.  In particular, $\vert \mathcal{D}^k_{n,0} \vert = C_n^k$ for all $k \geq 2$ and $n \geq 0$.  In the case of $k=2$, this corresponds to the classic combinatorial interpretation of the Catalan numbers by Dyck paths of length $2n$ and height $0$.  For more information about the $k$-Catalan numbers and their combinatorial interpretations, see Hilton and Pedersen \cite{HP} or Heubach, Li and Mansour \cite{HLM}.  For even more details about the classic Catalan numbers, see Stanley \cite{Stanley}.

Now let $C_k(t) = \sum_{n=0}^\infty C^k_n t^n$ be the ordinary generating function for the $k$-Catalan numbers.  As shown by Hilton and Pedersen \cite{HP}, the $k$-Catalan numbers satisfy $C^k_{n+1} = \sum_{i_1 + \hdots + i_k = n} C_{i_1} \cdots C_{i_k}$ for all $n \geq 0$, implying that these generating functions obey $C_k(t) = t C_k(t)^k + 1$.  If we use $[t^n]p(t)$ to denote the coefficient of $t^n$ in the power series $p(t)$, another standard result asserts $\vert \mathcal{D}^k_{n,h} \vert = [t^n]C_k(t)^{h+1}$ for all $n,h \geq 0$.  See Figure \ref{fig1: intro height decomposition} for the decomposition that yields this result.

\begin{figure}[ht!]
\centering
\begin{tikzpicture}
	[scale=.45,auto=left,every node/.style={circle,fill=black,inner sep=0pt,outer sep=0pt}]
	\draw[fill=gray!20!](0,0) rectangle (4,2);
	\node[fill=none] (b1) at (2,1) {\scalebox{1.2}{$P_0$}};
	\draw[thick] (4,0) to (5,1);
	\draw[fill=gray!20!](5,1) rectangle (9,3);
	\node[fill=none] (b1) at (7,2) {\scalebox{1.2}{$P_1$}};	
	\draw[thick] (9,1) to (10,2);
	\node[fill=none] (dots) at (11.5,2) {$\cdots$};
	\draw[fill=gray!20!](13,2) rectangle (17,4);
	\node[fill=none] (b1) at (15,3) {\scalebox{1.2}{$P_h$}};
	\node[inner sep=1pt] (1) at (0,0) {};
	\node[inner sep=1pt] (2) at (4,0) {};
	\node[inner sep=1pt] (3) at (5,1) {};
	\node[inner sep=1pt] (4) at (9,1) {};
	\node[inner sep=1pt] (5) at (10,2) {};
	\node[inner sep=1pt] (6) at (13,2) {};	
	\node[inner sep=1pt] (7) at (17,2) {};
\end{tikzpicture}
\caption{A $k$-Dyck path $P$ of height $h$ decomposed into a sequence of $h+1$ paths $P_i$ of height $0$, according to the rightmost $U$ steps at each height.  Note that some of the $P_i$ may be empty.}
\label{fig1: intro height decomposition}
\end{figure}
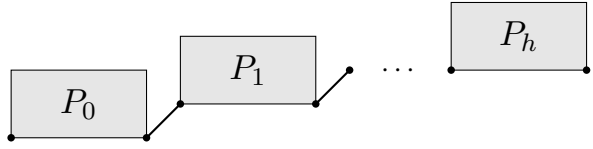

Also proven by Hilton and Pedersen \cite{HP} is that $[t^n]C_k(t)^r = \frac{r}{kn+r} \binom{kn+r}{n} = R_{k,r}(n)$ for all $k \geq 2$, $n \geq 0$, and $r \geq 1$.  Here the notation $R_{k,r}(n)$ corresponds to the Raney number (two-parameter Fuss-Catalan number).  This gives

\begin{equation}
\label{eq1: shape (0,h) closed formula}
\vert \mathcal{D}_{n,h}^k \vert = [t^n] C_k(t)^{h+1} = \frac{h+1}{kn+h+1} \binom{kn+h+1}{n}.
\end{equation}

The primary goal of this paper is to generalize the closed formula of \eqref{eq1: shape (0,h) closed formula} to generalized $k$-Dyck paths that may begin (as well as end) at any non-zero height, objects that we informally refer to as ``raised $k$-Dyck paths".  These raised $k$-Dyck paths may be interpreted as a natural generalization of the ``$k$-Dyck paths with negative boundary" (or $k_t$-Dyck paths) recently investigated by Selkirk \cite{Selkirk}, Asinowski, Hackl and Selkirk \cite{AHS}, and Prodinger \cite{Prodinger}, although our results are developed in such a manner that we needn't restrict our attention to starting heights less than $k$.  We then develop closed formulas for the number of raised $k$-Dyck paths with a fixed minimum height and a fixed number of returns.  To close the paper, we use our results to derive new generating functions for the number of $k$-Dyck paths of bounded height, a topic where all previous investigations appear to be limited to the $k=2$ case or are broadly theoretical and don't account for general starting/ending heights (see \cite{BP}, \cite{Bacher} for recent discussions concerning $k$-Dyck paths of bounded height).

\section{Raised k-Dyck paths}
\label{sec: raised Dyck paths}

Once again fix $k \geq 2$.  For any $\alpha,\beta \geq 0$, a raised $k$-Dyck path of length $\ell$ and shape $(\alpha,\beta)$ is an integer lattice path from $(0,\alpha)$ to $(\ell,\beta)$ that uses steps $\lbrace U=(1,1), D=(1,1-k) \rbrace$ and stays weakly above the line $y=0$.  The terminal point of any such path must satisfy $\ell = (\beta - \alpha) \bmod k$, justifying our restriction to $k$-Dyck paths of length $kn+\beta-\alpha$ and shape $(\alpha,\beta)$.  Denote the set of all $k$-Dyck paths of length $kn+\beta-\alpha$ and shape $(\alpha,\beta)$ by $\mathcal{D}^k_{n,(\alpha,\beta)}$, and then define $\vert \mathcal{D}^k_{n,(\alpha,\beta)} \vert = C^k_{n,(\alpha,\beta)}$.  Notice that all elements of $\mathcal{D}^k_{n,(\alpha,\beta)}$ contain precisely $n+\beta-\alpha$ up steps and $n$ down steps, meaning that the ``n index" of a particular path corresponds to its number of $D$ steps.

It is clear that $\mathcal{D}^k_{n,(0,\beta)} = \mathcal{D}^k_{n,\beta}$.  It is also clear that $\mathcal{D}^k_{n,(\beta,\beta)}$ is in bijection with integer lattice paths from $(0,0)$ to $(kn,0)$ that use step set $\lbrace U,D \rbrace$ and stay weakly above the line $y = -\beta$.  Horizontal reflection then places $\mathcal{D}^k_{n,(\beta,\beta)}$ in bijection with the $k_\beta$-Dyck paths of Selkirk \cite{Selkirk} and Asinowski, Hackl and Selkirk \cite{AHS}.  More generally, whenever $\alpha \geq \beta$, the set $\mathcal{D}^k_{n,(\alpha,\beta)}$ is in bijection with generalized $k$-Dyck paths from $(0,0)$ to $(kn + \beta,0)$ that stay weakly above $y=-\beta$ and begin with at least $\alpha$ consecutive $U$ steps.  This gives additional bijections between our sets and some of the $k_\beta$-Dyck paths studied by Prodinger \cite{Prodinger}.

Before proceeding, notice that a trivial path of length $\ell=0$ only exists when $n=0$ and $\alpha=\beta$.  In this case we have $C^k_{0,(\beta,\beta)}=1$.  Also note that $C^k_{n,(\alpha,\beta)}=0$ whenever $kn + \beta - \alpha < 0$.  This corresponds to the fact that any element of $\mathcal{D}^k_{n,(\alpha,\beta)}$ with $\alpha > \beta$ requires some minimal number of $D$ steps in order to end at the correct height.

Fundamental to much of our approach is the following recurrence.  In this and subsequent results, we automatically set $C^k_{n,(\alpha,\beta)} = 0$ whenever $\alpha < 0$ or $\beta < 0$.

\begin{proposition}
\label{thm2: basic recurrence}
For any $k \geq 2$, $n \geq 0$, and $\alpha,\beta \geq 0$ other than $n=0$ and $\alpha=\beta$,

$$C^k_{n,(\alpha,\beta)} = C^k_{n,(\alpha+1,\beta)} + C^k_{n-1,(\alpha-k+1,\beta)}.$$
\end{proposition}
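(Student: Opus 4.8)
The plan is to prove this by a bijective/decomposition argument on the first step of a raised $k$-Dyck path. Fix $k \geq 2$, $n \geq 0$, and $\alpha, \beta \geq 0$, and assume we are not in the trivial case $n = 0$, $\alpha = \beta$. Every path $P \in \mathcal{D}^k_{n,(\alpha,\beta)}$ has length $kn + \beta - \alpha \geq 1$, so it has a well-defined first step, which is either $U = (1,1)$ or $D = (1, 1-k)$.

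First I would handle the $U$ case. If $P$ begins with a $U$ step, then deleting that step and shifting the remaining path down by one yields a path from $(0, \alpha+1)$ to $(\ell - 1, \beta)$; since the original path stayed weakly above $y = 0$ and began at height $\alpha \geq 0$, the shifted tail begins at height $\alpha + 1$ and still stays weakly above $y = 0$ — in fact it automatically starts at height $\geq 1 > 0$, so no boundary violation is introduced. This tail has the same number $n$ of $D$ steps, so it lies in $\mathcal{D}^k_{n,(\alpha+1,\beta)}$, and the map is clearly a bijection onto that set (inverse: prepend a $U$). This accounts for the term $C^k_{n,(\alpha+1,\beta)}$.

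Next I would handle the $D$ case. A path beginning with $D$ requires $\alpha \geq k - 1$, since after a down step from height $\alpha$ we land at height $\alpha - k + 1$, which must be $\geq 0$; when $\alpha < k-1$ there are no such paths, and consistently $C^k_{n-1,(\alpha - k + 1, \beta)} = 0$ by the convention that the symbol vanishes for negative first coordinate (one should also note $n \geq 1$ is forced here since a $D$ step exists). Deleting the initial $D$ and shifting the remaining path up by $k - 1$ gives a path from $(0, \alpha - k + 1)$ to $(\ell - 1, \beta)$ using one fewer $D$ step, hence an element of $\mathcal{D}^k_{n-1,(\alpha-k+1,\beta)}$; again this is a bijection (inverse: prepend a $D$). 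One subtlety worth a sentence: shifting up cannot create a boundary violation, and shifting is the correct normalization because we always measure heights from the fixed line $y = 0$ rather than relative to the start. Combining the two disjoint cases gives $C^k_{n,(\alpha,\beta)} = C^k_{n,(\alpha+1,\beta)} + C^k_{n-1,(\alpha-k+1,\beta)}$.

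I do not anticipate a serious obstacle; the argument is a routine first-step decomposition. The only points that need care are (i) verifying that the trivial-path exclusion ($n=0$, $\alpha=\beta$) is exactly what guarantees a first step exists, and (ii) checking that the degenerate/boundary sub-cases — $\alpha < k - 1$ in the $D$ branch, and $n = 0$ forcing the $D$ branch to be empty — are correctly absorbed by the stated sign conventions $C^k_{n,(\alpha,\beta)} = 0$ for $\alpha < 0$ or $\beta < 0$, together with the earlier observation that $C^k_{n,(\alpha,\beta)} = 0$ when $kn + \beta - \alpha < 0$. Writing these bookkeeping checks cleanly is the bulk of the work.
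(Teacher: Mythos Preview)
Your approach is essentially identical to the paper's: partition $\mathcal{D}^k_{n,(\alpha,\beta)}$ by its first step and biject each piece with the appropriate target set. One small wording issue: after deleting the first step you only need a horizontal translation (shift left by one unit in $x$), not a vertical shift---your stated starting points $(0,\alpha+1)$ and $(0,\alpha-k+1)$ are correct, but the phrases ``shifting the remaining path down by one'' and ``shifting the remaining path up by $k-1$'' are not the operations that produce them.
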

\begin{proof}

Observe that $n=0$ and $\alpha=\beta$ corresponds to trivial paths, which cannot be decomposed as outlined below.  Excepting that case, let $S_U$ be the subset of $\mathcal{D}^k_{n,(\alpha,\beta)}$ including all paths that begin with a $U$ step, and let $S_D$ be the subset of $\mathcal{D}^k_{n,(\alpha,\beta)}$ including all paths that begin with a $D$ step.  Eliminating the first step of every $P \in S_U$ gives a path of length $kn+\beta-\alpha-1 = kn+\beta-(\alpha+1)$ and shape $(\alpha+1,\beta)$, placing $S_U$ in bijection with $\mathcal{D}^k_{n,(\alpha+1,\beta)}$.  Eliminating the first step of every $P \in S_D$ gives a path of length $kn+\beta-\alpha-1 = k(n-1)+\beta-(\alpha-k+1)$ and shape $(\alpha-k+1,\beta)$, placing $S_D$ in bijection with $\mathcal{D}^k_{n-1,(\alpha-k+1,\beta)}$.
\end{proof}

Fully utilizing the recurrence of Proposition \ref{thm2: basic recurrence} requires multivariate generating functions.  Simultaneously accounting for all shapes $(\alpha,\beta)$, define $C_k(q,r,t) = \sum_{\alpha,\beta,n \geq 0} C^k_{n,(\alpha,\beta)} q^\alpha r^\beta t^n$.  For reasons that will become clear in upcoming sections, we separately denote the ordinary generating function for paths of fixed shape $(\alpha,\beta)$ by $C_{k,(\alpha,\beta)}(t) = \sum_{n \geq 0} C^k_{n,(\alpha,\beta)} t^n = [q^\alpha r^\beta] C_k(q,r,t)$.

For fixed shape $(\alpha,\beta)$, observe that the order of $C_{k,(\alpha,\beta)}(t)$ is the smallest non-negative integer $n$ that such $n \geq \frac{\alpha-\beta}{k-1}$, corresponding to the minimal number of $D$ steps in any path of shape $(\alpha,\beta)$.  In particular, if $\alpha \leq \beta$ then $C_{k,(\alpha,\beta)}(t)$ has order $0$.  When $\alpha \leq \beta$, the minimal coefficient is always $[t^0]C_{k,(\alpha,\beta)}(t) = 1$, corresponding to the unique path of shape $(\alpha,\beta)$ with zero $D$ steps.  When $\alpha > \beta$, the minimal coefficient of $C_{k,(\alpha,\beta)}(t)$ may or may not be $1$.

Proposition \ref{thm2: basic recurrence} may be used to derive the following relationship for $C_k(q,r,t)$:

\begin{theorem}
\label{thm2: generating functions}
For any $k \geq 2$,

$$C_k(q,r,t) = \frac{\sum_{i \geq 0} \left( C_k(t)^{i+1} - q^{i+1} \right) r^i}{1-q+q^k t}.$$
\end{theorem}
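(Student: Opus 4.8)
The plan is to turn the recurrence of Proposition \ref{thm2: basic recurrence} into a functional equation for $C_k(q,r,t)$ by multiplying by $q^\alpha r^\beta t^n$ and summing. First I would multiply $C^k_{n,(\alpha,\beta)} = C^k_{n,(\alpha+1,\beta)} + C^k_{n-1,(\alpha-k+1,\beta)}$ by $q^\alpha r^\beta t^n$ and sum over all $n \geq 0$ and $\alpha,\beta \geq 0$ \emph{except} the excluded case $n = 0$, $\alpha = \beta$. The left-hand side then becomes $C_k(q,r,t)$ minus the contribution of the excluded trivial paths, which is $\sum_{\beta \geq 0} C^k_{0,(\beta,\beta)} q^\beta r^\beta = \sum_{\beta \geq 0}(qr)^\beta = 1/(1-qr)$. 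On the right-hand side, the first sum $\sum C^k_{n,(\alpha+1,\beta)} q^\alpha r^\beta t^n$ is, after reindexing $\alpha' = \alpha+1$, equal to $q^{-1}$ times the part of $C_k(q,r,t)$ supported on $\alpha \geq 1$; and since $C^k_{n,(0,\beta)} = C^k_{n,\beta}$ has generating function $C_k(t)^{\beta+1}$ by \eqref{eq1: shape (0,h) closed formula}, the $\alpha = 0$ slice that must be subtracted off is $\sum_{\beta,n} C^k_{n,\beta} r^\beta t^n = \sum_{\beta \geq 0} C_k(t)^{\beta+1} r^\beta$. Similarly the second sum reindexes (via $\alpha' = \alpha - k + 1$, $n' = n-1$) to $q^{k-1} t$ times the part of $C_k(q,r,t)$ supported on $\alpha \geq 0$, i.e. all of it, because the convention $C^k_{n,(\alpha,\beta)} = 0$ for $\alpha < 0$ automatically kills the problematic terms. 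Here I should double-check the bookkeeping at the boundary $0 \le \alpha \le k-2$ to confirm no extra correction term appears in the second sum — that is the one spot where an off-by-one could creep in.

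Assembling these pieces gives
$$C_k(q,r,t) - \frac{1}{1-qr} = \frac{1}{q}\left( C_k(q,r,t) - \sum_{\beta \geq 0} C_k(t)^{\beta+1} r^\beta \right) + q^{k-1} t \, C_k(q,r,t).$$
I would then solve this linear equation for $C_k(q,r,t)$: collecting terms yields $C_k(q,r,t)\left(1 - \tfrac{1}{q} - q^{k-1}t\right) = \tfrac{1}{1-qr} - \tfrac{1}{q}\sum_{\beta \geq 0} C_k(t)^{\beta+1} r^\beta$, and multiplying through by $-q$ and dividing gives
$$C_k(q,r,t) = \frac{\sum_{\beta \geq 0} C_k(t)^{\beta+1} r^\beta - \frac{q}{1-qr}}{1 - q + q^k t}.$$
To match the stated form, I would rewrite $\frac{q}{1-qr} = \sum_{\beta \geq 0} q^{\beta+1} r^\beta$ (valid as a formal power series in $r$), so the numerator becomes $\sum_{\beta \geq 0}\left(C_k(t)^{\beta+1} - q^{\beta+1}\right) r^\beta$, which is exactly the claimed numerator after renaming $\beta$ to $i$.

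The main obstacle, and the only genuinely delicate point, is justifying the formal-power-series manipulations: one must check that all the bivariate-in-$(q,r)$ and trivariate series involved actually converge in the ring of formal power series (e.g. that for each fixed power of $t$ only finitely many $(\alpha,\beta)$ contribute, which holds because $C^k_{n,(\alpha,\beta)} = 0$ once $\alpha - \beta > (k-1)n$ and also once $\beta - \alpha > n$), and that dividing by $1 - q + q^k t$ is legitimate — this needs care since $1 - q$ is not a unit in $\mathbb{Q}[[q,r,t]]$. The clean way around this is to treat everything as a power series in $t$ with coefficients that are rational functions of $q$ and $r$ (equivalently, work in $\mathbb{Q}(q,r)[[t]]$, where $1 - q + q^k t$ \emph{is} a unit since its constant term $1-q \neq 0$), derive the identity there, and observe that both sides are in fact power series in $q, r, t$ so the identity descends to $\mathbb{Q}[[q,r,t]]$. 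I would state this as a brief remark rather than belabor it. Everything else is the routine reindexing described above.
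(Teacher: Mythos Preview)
Your approach is correct and is essentially the paper's: both turn Proposition~\ref{thm2: basic recurrence} into a linear functional equation for $C_k(q,r,t)$ and solve, the only cosmetic difference being that the paper first reindexes the recurrence to $C^k_{n,(\alpha,\beta)} = C^k_{n,(\alpha-1,\beta)} - C^k_{n-1,(\alpha-k,\beta)}$ (for $\alpha\ge 1$), which yields $(1-q+q^kt)\,C_k(q,r,t)=\sum_{i\ge 0}(C_k(t)^{i+1}-q^{i+1})r^i$ directly and so never introduces a stray $q^{-1}$. One small correction to your closing remarks: $1-q+q^kt$ (and indeed $1-q$) \emph{is} a unit in $\mathbb{Q}[[q,r,t]]$, since its constant term is $1$, so no detour through $\mathbb{Q}(q,r)[[t]]$ is needed---and in fact that detour would be problematic, since $[t^n]C_k(q,r,t)$ is a power series in $q,r$ rather than a rational function.
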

\begin{proof}
The recurrence of Proposition \ref{thm2: basic recurrence} is equivalent to $C^k_{n,(\alpha,\beta)} = C^k_{n,(\alpha-1,\beta)} - C^k_{n-1,(\alpha-k,\beta)}$ for all $n \geq 0$ and $\alpha \geq 1$.  This suggests a relation that includes $C_k(q,r,t) = q \kern+1pt C_k(q,r,t) - q^k t \kern+1pt C_k(q,r,t)$.  Accounting for the $\alpha=0$ case, where shape $(0,\beta)$ paths are generated by $C_k(t)^{\beta+1}$, requires an additional $\sum_{i \geq 0} C_k(t)^{i+1} r^i$ term on the right side.  Also accounting for the trivial case of $n=0$ and $\alpha=\beta$, to which Proposition \ref{thm2: basic recurrence} doesn't apply, we have the full recurrence

\begin{equation}
\label{eq2: generating function recurrence}
C_k(q,r,t) = q \kern+1pt C_k(q,r,t) - q^k t \kern+1pt C_k(q,r,t) + \sum_{i \geq 0} C_k(t)^{i+1} r^i - \sum_{i \geq 0} q^{i+1} r^i.
\end{equation}
\end{proof}

The generating function of Theorem \ref{thm2: generating functions} may be used to derive closed formulas for all of the $C^k_{n,(\alpha,\beta)}$, regardless of starting height.  In all that follows, we set $\binom{a}{b} = 0$ whenever $a < 0$ or $b < 0$.

\begin{theorem}
\label{thm2: closed formula}
For any $k \geq 2$ and $n,\alpha,\beta \geq 0$,

\begin{center}
\scalebox{.89}{
$\displaystyle{C^k_{n,(\alpha,\beta)} = \left( \sum_{i \geq 0} (-1)^i \frac{\beta+1}{k(n-i)+\beta+1} \binom{k(n-i)+\beta+1}{n-i} \binom{\alpha-(k-1)i}{i} \right) - (-1)^n \binom{\alpha-\beta-1-(k-1)n}{n}}$.}
\end{center}
\end{theorem}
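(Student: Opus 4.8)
The plan is to read off $C^k_{n,(\alpha,\beta)} = [q^\alpha r^\beta t^n]\,C_k(q,r,t)$ directly from the closed form for $C_k(q,r,t)$ in Theorem~\ref{thm2: generating functions}, peeling off one variable at a time. Since the denominator $1-q+q^k t$ contains no $r$, extracting $[r^\beta]$ from the numerator $\sum_{i\geq 0}\bigl(C_k(t)^{i+1}-q^{i+1}\bigr)r^i$ is immediate and gives
$$C_{k,(\alpha,\beta)}(t) = [q^\alpha]\,\frac{C_k(t)^{\beta+1}-q^{\beta+1}}{1-q+q^k t}.$$
Next I would expand the rational factor as a power series in $q$ by writing $1-q+q^k t = 1-q\bigl(1-q^{k-1}t\bigr)$, summing the resulting geometric series, and applying the binomial theorem to each $(1-q^{k-1}t)^j$. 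This yields $[q^m]\frac{1}{1-q+q^k t} = \sum_{i\geq 0}(-1)^i\binom{m-(k-1)i}{i}t^i$ for $m\geq 0$ (and $0$ for $m<0$), where the convention $\binom{a}{b}=0$ for $a<0$ makes each such sum finite; note also $\binom{\alpha-(k-1)i}{i}\neq 0$ forces $i\leq \alpha/k$, so only finitely many $i$ contribute.

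Multiplying through and taking $[q^\alpha]$ then splits into two pieces, giving
$$C_{k,(\alpha,\beta)}(t) = C_k(t)^{\beta+1}\sum_{i\geq 0}(-1)^i\binom{\alpha-(k-1)i}{i}t^i \;-\; \sum_{i\geq 0}(-1)^i\binom{\alpha-\beta-1-(k-1)i}{i}t^i,$$
where the second sum is the contribution of the $-q^{\beta+1}$ term and vanishes precisely when $\alpha\leq\beta$, matching the order-$0$ observation made just before the theorem. Finally I would extract $[t^n]$: the first sum contributes $\sum_{i\geq 0}(-1)^i\binom{\alpha-(k-1)i}{i}\,[t^{n-i}]C_k(t)^{\beta+1}$, and invoking the Raney-number identity $[t^m]C_k(t)^{\beta+1} = \frac{\beta+1}{km+\beta+1}\binom{km+\beta+1}{m}$ (legitimate since $\beta+1\geq 1$, and equal to $0$ for $m<0$ under our binomial convention) turns this into the main sum of the claimed formula; the second sum contributes exactly $(-1)^n\binom{\alpha-\beta-1-(k-1)n}{n}$, the subtracted term. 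Assembling the two pieces proves the theorem.

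I expect the substantive work to be entirely bookkeeping rather than any new idea. The main things to be careful about are: (i) justifying the formal-power-series manipulations in the $q$-expansion, i.e.\ that each coefficient of $q^m$ is a genuine polynomial in $t$ so that the subsequent $[t^n]$-extraction is term-by-term; (ii) checking that the two ad hoc correction terms of \eqref{eq2: generating function recurrence} survive extraction in exactly the way that produces the stated split between the summation and the lone $(-1)^n$ term; and (iii) verifying the boundary cases, most notably $n=0$ with $\alpha=\beta$ (which is excluded from Proposition~\ref{thm2: basic recurrence} and handled separately in the derivation of $C_k(q,r,t)$) and the cases $\alpha\leq\beta$, where one should confirm the formula collapses to $C^k_{0,(\alpha,\beta)}=1$ because the subtracted binomial and all $i\geq 1$ Raney terms vanish.
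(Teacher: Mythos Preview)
Your proposal is correct and follows essentially the same route as the paper: extract $[r^\beta]$ from Theorem~\ref{thm2: generating functions}, expand $1/(1-q+q^kt)$ as a geometric series to identify $[q^\alpha]$, obtain the intermediate identity \eqref{eq2: closed formula proof 2}, and then read off $[t^n]$ via the Raney-number expansion of $C_k(t)^{\beta+1}$. The only difference is cosmetic---you spell out the binomial expansion of $(1-q^{k-1}t)^j$ where the paper simply asserts the resulting coefficient formula---and your cautionary remarks about boundary cases and formal-power-series bookkeeping are reasonable but not needed for the argument to go through.
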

\begin{proof}
Specializing the formula of Theorem \ref{thm2: generating functions} to fixed $\beta$ gives
\begin{equation}
\label{eq2: closed formula proof 1}
[r^\beta] C_k(q,r,t) = \frac{C_k(t)^{\beta+1}-q^{\beta+1}}{1-q+q^kt} = \left( C_k(t)^{\beta+1}-q^{\beta+1} \right) \left( 1 + (q-q^kt) + (q-q^kt)^2 + \hdots \right).
\end{equation}
One may verify that the coefficient of $q^\alpha$ in $(1 + (q-q^kt) + (q-q^kt)^2 + \hdots)$ is $\sum_{i \geq 0} (-1)^i \binom{\alpha - (k-1)i}{i} t^i$.  This implies
\begin{equation}
\label{eq2: closed formula proof 2} 
[q^\alpha r^\beta]C_k(q,r,t) = C_k(t)^{\beta+1} \sum_{i \geq 0} (-1)^i \binom{\alpha - (k-1)i}{i} t^i - \sum_{i \geq 0} (-1)^i \binom{\alpha - \beta - 1 - (k-1)i}{i} t^i.
\end{equation}
As noted in Section \ref{sec: intro}, $C_k(t)^{\beta+1}$ may be rewritten as $C_k(t)^{\beta+1} = \sum_{i \geq 0} \frac{\beta+1}{ki+\beta+1} \binom{ki+\beta+1}{i} t^i$.  This transforms the first term from the right side of \eqref{eq2: closed formula proof 2} into a convolution of two power series.  Extracting the coefficient of $q^\alpha$ from both terms of \eqref{eq2: closed formula proof 2} yields our formula for $[q^\alpha r^\beta t^n] C_k(q,r,t) = C^k_{n,(\alpha,\beta)}$.
\end{proof}

Inspecting the formula of Theorem \ref{thm2: closed formula}, observe that the trailing term can only be nonzero when $\alpha > \beta$.  Also, at least one of the binomial coefficients from each term of the summation is zero unless $i \leq \min(n,\frac{\alpha}{k})$.

All of this means that the formula of Theorem \ref{thm2: closed formula} is much simpler when the starting height $\alpha$ is relatively small.  In particular, when $0 \leq \alpha \leq k-1$, the leading summation contains only a single nonzero term and we have the following.

\begin{corollary}
\label{thm2: closed formula corollary 1}
For any $k \geq 2$, $\beta \geq 0$, and $0 \leq \alpha \leq k-1$,

$$C^k_{n,(\alpha,\beta)} =
\begin{cases}
\frac{\beta+1}{kn+\beta+1} \binom{kn+\beta+1}{n} = R_{k,\beta+1}(n) & \text{ if } n>0,\\
1 & \text{ if } n=0 \text{ and } \alpha \leq \beta,\\
0 & \text{ if } n=0 \text{ and } \alpha > \beta.
\end{cases}$$
\end{corollary}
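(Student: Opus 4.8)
The plan is to read this off as a direct specialization of the closed formula in Theorem~\ref{thm2: closed formula}, using the hypothesis $0 \le \alpha \le k-1$ to annihilate all but one term of the leading summation and, when $n>0$, the trailing term as well. I would split the verification according to whether $n \ge 1$ or $n = 0$, keeping careful track of the two binomial conventions in force: $\binom{0}{i} = 0$ for $i \ge 1$ while $\binom{\alpha}{0} = 1$ for $\alpha \ge 0$, and $\binom{a}{b} = 0$ precisely when $a<0$ or $b<0$.

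For the leading summation $\sum_{i \ge 0} (-1)^i \frac{\beta+1}{k(n-i)+\beta+1}\binom{k(n-i)+\beta+1}{n-i}\binom{\alpha-(k-1)i}{i}$, note that for every $i \ge 1$ we have $\alpha - (k-1)i \le (k-1)(1-i) \le 0$. When $i \ge 2$ this is strictly negative (as $k \ge 2$), so $\binom{\alpha-(k-1)i}{i} = 0$ by convention; when $i = 1$ either $\alpha < k-1$, so $\alpha - (k-1) < 0$ and the binomial vanishes, or $\alpha = k-1$, so it is the ordinary $\binom{0}{1} = 0$. Hence only the $i=0$ term survives, and since $\binom{\alpha}{0} = 1$ the leading summation equals $\frac{\beta+1}{kn+\beta+1}\binom{kn+\beta+1}{n} = R_{k,\beta+1}(n)$. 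For the trailing term $(-1)^n \binom{\alpha-\beta-1-(k-1)n}{n}$ with $n \ge 1$, the hypothesis gives $\alpha - \beta - 1 - (k-1)n \le (k-1) - 0 - 1 - (k-1) = -1 < 0$, so this term vanishes by convention. Combining the two observations yields $C^k_{n,(\alpha,\beta)} = R_{k,\beta+1}(n)$ for all $n \ge 1$, which is the first case.

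For $n = 0$ the leading summation reduces to its $i=0$ term $\frac{\beta+1}{\beta+1}\binom{\beta+1}{0}\binom{\alpha}{0} = 1$, while the trailing term is $(-1)^0\binom{\alpha-\beta-1}{0}$, which equals $1$ when $\alpha - \beta - 1 \ge 0$ (that is, $\alpha > \beta$) and equals $0$ when $\alpha - \beta - 1 < 0$ (that is, $\alpha \le \beta$). Subtracting gives $C^k_{0,(\alpha,\beta)} = 1$ if $\alpha \le \beta$ and $0$ if $\alpha > \beta$, matching the two remaining cases.

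There is no genuine obstacle in this argument; the only thing requiring attention is the bookkeeping around the competing binomial conventions above, together with separating the $n = 0$ boundary from the $n \ge 1$ argument (one must not accidentally conclude the trailing term vanishes at $n = 0$). As an independent sanity check I would also note that Proposition~\ref{thm2: basic recurrence} already forces, for $n \ge 1$ and $0 \le \alpha \le k-2$, the identity $C^k_{n,(\alpha,\beta)} = C^k_{n,(\alpha+1,\beta)}$, since the other summand $C^k_{n-1,(\alpha-k+1,\beta)}$ has a negative second index and hence vanishes; iterating from $\alpha = 0$ up to $\alpha = k-1$ and using $C^k_{n,(0,\beta)} = |\mathcal{D}^k_{n,\beta}| = R_{k,\beta+1}(n)$ recovers the $n \ge 1$ case without reference to Theorem~\ref{thm2: closed formula}.
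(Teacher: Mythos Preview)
Your proof is correct and follows essentially the same approach as the paper: specialize Theorem~\ref{thm2: closed formula}, observe that only the $i=0$ term of the leading sum survives when $0 \le \alpha \le k-1$, and that the trailing term vanishes for $n \ge 1$ while for $n=0$ the expression reduces to $\frac{\beta+1}{\beta+1}\binom{\beta+1}{0} - \binom{\alpha-\beta-1}{0}$. Your write-up simply makes explicit the binomial-convention bookkeeping that the paper leaves implicit, and your closing sanity check via Proposition~\ref{thm2: basic recurrence} is an independent alternative argument (essentially the bijective remark the paper makes after Corollary~\ref{thm2: closed formula corollary 2}) rather than a different route to the same corollary.
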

\begin{proof}
When $n > 0$ and $\alpha \leq k-1$, the final term from Theorem \ref{thm2: closed formula} is always zero and the leading summation simplifies to a single term.  When $n = 0$, Theorem \ref{thm2: closed formula} gives $\frac{\beta+1}{\beta+1} \binom{\beta+1}{0} - \binom{\alpha-\beta-1}{0}$.
\end{proof}

Still restricting our attention to $0 \leq \alpha \leq k-1$, we can alternatively begin with \eqref{eq2: closed formula proof 2} to recast Corollary \ref{thm2: closed formula corollary 1} is terms of the generating functions $C_{k,(\alpha,\beta)}(t) = [q^\alpha r^\beta] C_k(q,r,t)$:

\begin{corollary}
\label{thm2: closed formula corollary 2}
For any $k \geq 2$, $\beta \geq 0$, and $0 \leq \alpha \leq k-1$,

$$C_{k,(\alpha,\beta)}(t) =
\begin{cases}
C_k(t)^{\beta+1} & \text{ if } \alpha \leq \beta,\\
C_k(t)^{\beta+1} - 1 & \text{ if } \alpha > \beta.
\end{cases}
$$
\end{corollary}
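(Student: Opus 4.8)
The plan is to start from equation~\eqref{eq2: closed formula proof 2}, which already records $C_{k,(\alpha,\beta)}(t) = [q^\alpha r^\beta]C_k(q,r,t)$ as the difference of two explicit power series in $t$, and to see how each series degenerates under the extra hypothesis $0 \leq \alpha \leq k-1$.

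First I would simplify the leading term $C_k(t)^{\beta+1}\sum_{i\geq 0}(-1)^i\binom{\alpha-(k-1)i}{i}t^i$. The $i=0$ summand is $\binom{\alpha}{0}=1$. For $i\geq 1$, the bound $\alpha\leq k-1$ gives $\alpha-(k-1)i\leq (k-1)(1-i)\leq 0$, so the upper index is negative — and the binomial is $0$ — except in the single boundary case $\alpha=k-1$, $i=1$, where the upper index is exactly $0$ and the binomial is $\binom{0}{1}=0$ as well. Hence the sum collapses to $1$ and the leading term is precisely $C_k(t)^{\beta+1}$.

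Next I would simplify the trailing term $\sum_{i\geq 0}(-1)^i\binom{\alpha-\beta-1-(k-1)i}{i}t^i$. For $i\geq 1$ the upper index satisfies $\alpha-\beta-1-(k-1)i\leq (k-1)-0-1-(k-1)=-1<0$, so every such binomial vanishes and the sum reduces to its $i=0$ term $\binom{\alpha-\beta-1}{0}$. Because $\alpha$ and $\beta$ are integers, this equals $1$ when $\alpha>\beta$ and $0$ when $\alpha\leq\beta$. Subtracting the trailing term from the leading term then yields exactly the two cases in the statement.

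There is no serious obstacle here; the only point needing care is the boundary case $\alpha=k-1$, $i=1$ in the leading sum, where one must appeal to $\binom{0}{1}=0$ rather than the ``negative upper index is zero'' convention. As an alternative derivation, one could assemble the generating function directly from the coefficients of Corollary~\ref{thm2: closed formula corollary 1}: for $n>0$ these are $R_{k,\beta+1}(n)=[t^n]C_k(t)^{\beta+1}$, and since $R_{k,\beta+1}(0)=1=[t^0]C_k(t)^{\beta+1}$, the $n\geq 1$ contribution already sums to $C_k(t)^{\beta+1}-1$; adding the $n=0$ coefficient ($1$ if $\alpha\leq\beta$, otherwise $0$) produces the same dichotomy.
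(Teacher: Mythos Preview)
Your proof is correct and follows essentially the same route as the paper: both start from equation~\eqref{eq2: closed formula proof 2} and observe that under $0\le\alpha\le k-1$ only the $i=0$ terms survive in each sum, yielding $C_k(t)^{\beta+1}\binom{\alpha}{0}-\binom{\alpha-\beta-1}{0}$. Your write-up simply makes explicit the boundary check $\binom{0}{1}=0$ and the case split on $\binom{\alpha-\beta-1}{0}$ that the paper leaves implicit; the alternative route you sketch via Corollary~\ref{thm2: closed formula corollary 1} is a fine sanity check but not needed.
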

\begin{proof}
By \eqref{eq2: closed formula proof 2}, when $0 \leq \alpha \leq k-1$ we have $[q^\alpha r^\beta]C_k(q,r,t) = C_k(t)^{\beta+1} \binom{\alpha}{0} - \binom{\alpha-\beta-1}{0}$.
\end{proof}

Corollaries \ref{thm2: closed formula corollary 1} and \ref{thm2: closed formula corollary 2} place our work in agreement with Selkirk \cite{Selkirk} and Asinowski, Hackl and Selkirk \cite{AHS}, assuming we restrict ourselves to their range of $0 \leq \alpha \leq k-1$.  In this case, observe that Corollaries \ref{thm2: closed formula corollary 1} and \ref{thm2: closed formula corollary 2} may also be proven by placing $\mathcal{D}^k_{n,(\alpha,\beta)}$ in bijection with $\mathcal{D}^k_{n,(0,\beta)}$ via the map that adds $\alpha$ consecutive $U$ steps to the beginning of every $P \in \mathcal{D}^k_{n,(\alpha,\beta)}$.  This bijection fails when $\alpha > k-1$, since it is no longer the case that every $P \in D^k_{n,(0,\beta)}$ must begin with $\alpha > k-1$ consecutive $U$ steps.

Computation of $C_{k,(\alpha,\beta)}(t)$ becomes increasingly difficult as one extends above $\alpha = k-1$.  See Appendix \ref{sec: appendix} for a comparison of the sequences generated by $C_{k,(\alpha,0)}(t)$ to previously-catalogued sequences on OEIS \cite{OEIS}, for small $k \geq 2$ and various shapes $(\alpha,\beta)$.

\subsection{Raised k-Dyck paths, k=2 case}
\label{subsec: raised Dyck paths k=2}

As with most combinatorial objects related to the $k$-Catalan numbers, investigating raised $k$-Dyck paths becomes much easier in the case of $k=2$.  In this subsection, we present a series of results involving the $C^k_{n,(\alpha,\beta)}$ that hold only when $k=2$.

The primary reason the $k=2$ case is simpler is the fact that the left-right reflection of a raised $2$-Dyck path still qualifies as a raised $2$-Dyck path.  In particular, reflecting a $2$-Dyck path of length $2n+\beta-\alpha$ and shape $(\alpha,\beta)$ results in a $2$-Dyck path of length $2n+\beta-\alpha = 2(n+\beta-\alpha)+\alpha-\beta$ shape $(\beta,\alpha)$.  In terms of generating functions, this prompts:

\begin{proposition}
\label{thm2: k=2 horizontal reflection}
For all $\alpha,\beta \geq 0$,
$$C_{2,(\beta,\alpha)}(t) = t^{\beta-\alpha} C_{2,(\alpha,\beta)}(t).$$
\end{proposition}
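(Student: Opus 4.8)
The plan is to prove this bijectively, using the left--right reflection already advertised in the paragraph preceding the proposition. Fix $k=2$, so the only admissible steps are $U=(1,1)$ and $D=(1,-1)$. Given $P \in \mathcal{D}^2_{n,(\alpha,\beta)}$ passing through the lattice points $(0,y_0),(1,y_1),\dots,(\ell,y_\ell)$ with $\ell = 2n+\beta-\alpha$, $y_0=\alpha$, and $y_\ell=\beta$, I would let $\rho(P)$ be the path through $(0,y_\ell),(1,y_{\ell-1}),\dots,(\ell,y_0)$. Since $\rho(P)$ visits exactly the same set of heights as $P$, it still stays weakly above $y=0$; its $i$-th step has height change $-(y_{\ell-i+1}-y_{\ell-i}) \in \{+1,-1\}$, so it again uses only $U$ and $D$; and the number of $U$ steps and $D$ steps of $P$ are interchanged, so $\rho(P)$ has $n$ up steps and $n+\beta-\alpha$ down steps. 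Hence $\rho(P) \in \mathcal{D}^2_{n+\beta-\alpha,(\beta,\alpha)}$, and $\rho$ is an involution between $\bigcup_n \mathcal{D}^2_{n,(\alpha,\beta)}$ and $\bigcup_m \mathcal{D}^2_{m,(\beta,\alpha)}$, hence a bijection $\mathcal{D}^2_{n,(\alpha,\beta)} \to \mathcal{D}^2_{n+\beta-\alpha,(\beta,\alpha)}$ for each $n$. (This is precisely where the argument fails for $k\ge 3$: reversing a $D=(1,1-k)$ step produces a $(1,k-1)$ step, which is not in the step set.)

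Next I would record the numerical consequence $C^2_{n,(\alpha,\beta)} = C^2_{n+\beta-\alpha,(\beta,\alpha)}$ for all $n \ge 0$, where a set with negative index is read as empty; this is consistent because when $\alpha > \beta$ one has $C^2_{n,(\alpha,\beta)}=0$ for $n<\alpha-\beta$ (the order of $C_{2,(\alpha,\beta)}(t)$ is $\alpha-\beta$, as noted in the discussion preceding the proposition), matching the vanishing of the right-hand side. Multiplying by $t^n$, summing, and reindexing via $m=n+\beta-\alpha$ then gives
\[
C_{2,(\alpha,\beta)}(t) \;=\; \sum_{n\ge 0} C^2_{n+\beta-\alpha,(\beta,\alpha)}\, t^n \;=\; t^{\alpha-\beta}\sum_{m\ge 0} C^2_{m,(\beta,\alpha)}\, t^m \;=\; t^{\alpha-\beta}\, C_{2,(\beta,\alpha)}(t),
\]
which rearranges to the stated identity $C_{2,(\beta,\alpha)}(t) = t^{\beta-\alpha} C_{2,(\alpha,\beta)}(t)$. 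When $\beta<\alpha$ the exponent $\beta-\alpha$ is negative, but this is harmless: since $C_{2,(\alpha,\beta)}(t)$ has order $\ge \alpha-\beta$, the product $t^{\beta-\alpha}C_{2,(\alpha,\beta)}(t)$ is still an honest formal power series.

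I do not expect a serious obstacle here; the only points that need care are confirming that $\rho$ really lands in $\mathcal{D}^2_{n+\beta-\alpha,(\beta,\alpha)}$ (tracking the swap of the $U$- and $D$-counts and the resulting shift in the $n$-index) and the exponent bookkeeping that keeps the statement meaningful when $\alpha>\beta$. As an alternative, one could instead extract $[q^\beta r^\alpha]$ and $[q^\alpha r^\beta]$ from the $k=2$ specialization of Theorem \ref{thm2: generating functions} and compare the two directly, but that computation is messier and far less transparent than the reflection, so I would keep the bijective argument as the proof and mention the algebraic verification only in passing, if at all.
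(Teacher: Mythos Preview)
Your proposal is correct and follows exactly the approach the paper itself uses: the paper does not supply a formal proof environment for this proposition, but the paragraph immediately preceding it sketches precisely the left--right reflection bijection you carry out, noting that a shape-$(\alpha,\beta)$ path of length $2n+\beta-\alpha$ reflects to a shape-$(\beta,\alpha)$ path of length $2(n+\beta-\alpha)+\alpha-\beta$. Your write-up simply fills in the details of that sketch (the $n$-index shift and the order consideration when $\alpha>\beta$), so there is nothing to compare.
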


Notice that Proposition \ref{thm2: k=2 horizontal reflection} holds even if $\beta - \alpha < 0$.  If $\alpha > \beta$, then $C_{2,(\alpha,\beta)}(t)$ has order $\alpha - \beta$ and $t^{\beta-\alpha} C_{2,(\alpha,\beta)}(t)$ is a valid (order $0$) power series.  When dealing with the $k=2$ case, Proposition \ref{thm2: k=2 horizontal reflection} allows us to restrict our attention to shapes $(\alpha,\beta)$ where $\beta \geq \alpha$.

Our next result is a replacement of the generating function equation \eqref{eq2: closed formula proof 2} from the proof of Theorem \ref{thm2: closed formula} that holds only when $k=2$.

\begin{theorem}
\label{thm2: k=2 generating function}
For all $\alpha,\beta \geq 0$,
$$C_{2,(\alpha,\beta)}(t) = \sum_{i=0}^{\min(\alpha,\beta)} t^{\alpha-i} C_2(t)^{\alpha+\beta+1-2i}.$$
\end{theorem}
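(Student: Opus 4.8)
The plan is to prove Theorem~\ref{thm2: k=2 generating function} directly from the generating function machinery of Theorem~\ref{thm2: generating functions}, specialized to $k=2$, rather than by a combinatorial decomposition of paths. Setting $k=2$ in equation~\eqref{eq2: closed formula proof 2} gives
\[
C_{2,(\alpha,\beta)}(t) = C_2(t)^{\beta+1} \sum_{i \geq 0} (-1)^i \binom{\alpha - i}{i} t^i - \sum_{i \geq 0} (-1)^i \binom{\alpha - \beta - 1 - i}{i} t^i.
\]
So the first task is to identify the two generating functions $\sum_i (-1)^i \binom{m-i}{i} t^i$ appearing here. These are (up to the usual reindexing) the generating functions for the Chebyshev-like / Fibonacci-type polynomials, and in the context of Dyck paths they are well known to be rational functions of $C_2(t)$: using $C_2(t) = tC_2(t)^2 + 1$, one has $1 - C_2(t) + tC_2(t)^2 \cdot(\text{something})$ collapsing, and more precisely $\sum_{i\ge 0}(-1)^i\binom{m-i}{i}t^i = \dfrac{C_2(t)^{m+1} - t^{m+1}\,(\text{tail})}{\,\dots\,}$. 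Rather than guess the exact closed form, I would derive it cleanly: the coefficient of $q^m$ in $\dfrac{1}{1-q+q^2t}$ is exactly $\sum_{i\ge 0}(-1)^i\binom{m-i}{i}t^i$, and $\dfrac{1}{1-q+q^2t} = \dfrac{1}{t}\cdot\dfrac{1}{(q_+ - q)(q_- - q)}\cdot(\text{const})$ where $q_\pm$ are the roots of $q^2 t - q + 1$; partial fractions then give a clean closed form. Crucially, $q_+ q_- = 1/t$ and $q_+ + q_- = 1/t$, and one of the roots is $q_- = C_2(t)$ itself (since $C_2(t)$ satisfies $tC_2^2 - C_2 + 1 = 0$), with the other root $q_+ = 1/(tC_2(t))$. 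This is the key identification.

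With $q_- = C_2(t)$ and $q_+ = \frac{1}{tC_2(t)}$ in hand, partial fractions on $\frac{1}{1-q+q^2t} = \frac{1}{t(q_+-q)(q_--q)}$ yield $[q^m]\frac{1}{1-q+q^2t} = \frac{1}{t(q_+-q_-)}\left(q_-^{-(m+1)} \cdot(\dots) - \dots\right)$; carried through, $[q^m]\frac{1}{1-q+q^2t} = \frac{C_2(t)^{m+1} - (tC_2(t))^{-(m+1)}}{C_2(t) - (tC_2(t))^{-1}}$ after simplification, which one checks telescopes to a polynomial-in-$C_2$ identity. The cleanest route is in fact to note that
\[
[q^m]\frac{1}{1-q+q^2t} = \sum_{j=0}^{m} t^{m-j}\, C_2(t)^{\,2j - m}\bigl(\text{restricted to }2j\ge m\bigr),
\]
which one verifies by checking it satisfies the recurrence $a_m = a_{m-1} - t\,a_{m-2}$ (coming directly from $1 - q + q^2 t$) with the right initial terms $a_0 = 1$, $a_1 = 1$ — here using $C_2 = 1 + tC_2^2$ to close the recurrence. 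Granting this, the first term of the displayed expression for $C_{2,(\alpha,\beta)}(t)$ becomes $C_2(t)^{\beta+1}\sum_{j,\,2j\ge\alpha}t^{\alpha-j}C_2(t)^{2j-\alpha} = \sum_{j} t^{\alpha-j} C_2(t)^{\beta+1+2j-\alpha}$, and the second (subtracted) term becomes $\sum_{j}t^{\alpha-\beta-1-j}C_2(t)^{2j-(\alpha-\beta-1)}$. Reindexing the subtracted sum and observing that it exactly cancels the ``tail'' of the first sum (the terms with $j$ large, i.e. the terms of the first sum where $C_2$ appears to a power exceeding $\alpha+\beta+1$) collapses everything to $\sum_{i=0}^{\min(\alpha,\beta)} t^{\alpha-i} C_2(t)^{\alpha+\beta+1-2i}$, as claimed. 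One should separately sanity-check the boundary of summation ($\min(\alpha,\beta)$ versus $\lfloor\alpha/2\rfloor$) against Corollary~\ref{thm2: closed formula corollary 2} and against Proposition~\ref{thm2: k=2 horizontal reflection}; indeed symmetry under $\alpha\leftrightarrow\beta$ up to the factor $t^{\beta-\alpha}$ is automatic from the claimed formula, which is a good consistency test.

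The main obstacle I anticipate is the bookkeeping in the cancellation step: showing that the subtracted series $\sum_{i\ge 0}(-1)^i\binom{\alpha-\beta-1-i}{i}t^i$ lines up term-by-term with the unwanted tail of $C_2(t)^{\beta+1}\sum_{i\ge 0}(-1)^i\binom{\alpha-i}{i}t^i$, including getting the index ranges and the lower limit $\min(\alpha,\beta)$ exactly right (the two natural upper limits $\lfloor \alpha/2\rfloor$ and what survives after cancellation must be reconciled, and the case $\alpha > \beta$ versus $\alpha \le \beta$ behaves slightly differently). An alternative, and possibly cleaner, strategy to avoid the partial-fractions detour entirely: prove the formula by induction on $\alpha$ (or on $\alpha+\beta$) using the recurrence of Proposition~\ref{thm2: basic recurrence} in the form $C_{2,(\alpha,\beta)}(t) = C_{2,(\alpha+1,\beta)}(t) + t\,C_{2,(\alpha-1,\beta)}(t)$, rearranged as $C_{2,(\alpha+1,\beta)}(t) = C_{2,(\alpha,\beta)}(t) - t\,C_{2,(\alpha-1,\beta)}(t)$; one then just needs to check that $\sum_{i=0}^{\min(\alpha,\beta)} t^{\alpha-i} C_2(t)^{\alpha+\beta+1-2i}$ satisfies this recurrence and matches the base cases $\alpha \in \{0,1\}$ supplied by Corollary~\ref{thm2: closed formula corollary 2}, with the single identity $C_2(t) = 1 + tC_2(t)^2$ doing all the algebraic work. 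I would present the induction proof as the primary argument and relegate the partial-fractions identification to a remark.
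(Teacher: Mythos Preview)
Your approach is genuinely different from the paper's. The paper gives a three-line combinatorial argument: partition $\mathcal{D}^2_{n,(\alpha,\beta)}$ according to the minimum height $i$ attained, and observe that a path with minimum height $i$ decomposes (at the leftmost $D$ steps reaching each level on the way down, and the rightmost $U$ steps on the way back up) into $(\alpha-i)+(\beta-i)+1$ height-$0$ subpaths plus $\alpha-i$ ``external'' $D$ steps, yielding $t^{\alpha-i}C_2(t)^{\alpha+\beta+1-2i}$ directly. That argument simultaneously proves the theorem and gives a combinatorial meaning to each summand (it is exactly the generating function for paths of minimum height $i$, cf.\ Corollary~\ref{thm3: minimum height corollary 2}).

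Your algebraic route via equation~\eqref{eq2: closed formula proof 2} is viable in principle, but the sketch has two concrete problems. First, the displayed identity $[q^m]\tfrac{1}{1-q+q^2t}=\sum_{2j\ge m} t^{m-j}C_2(t)^{2j-m}$ is false as written: for $m=1$ it gives $C_2(t)$ rather than $1$, and for $m=2$ it gives $t+C_2(t)^2$ rather than $1-t$. The partial-fractions computation you outline is correct (the roots of $tq^2-q+1$ really are $C_2(t)$ and $1/(tC_2(t))$), but the simplification you wrote down does not follow from it; the cancellation you anticipate as ``the main obstacle'' is in fact not the issue --- the identity itself needs to be redone. Second, the generating-function recurrence $C_{2,(\alpha+1,\beta)}(t)=C_{2,(\alpha,\beta)}(t)-t\,C_{2,(\alpha-1,\beta)}(t)$ is missing a correction: Proposition~\ref{thm2: basic recurrence} excludes $n=0,\alpha=\beta$, so summing over $n$ gives
\[
C_{2,(\alpha+1,\beta)}(t)=C_{2,(\alpha,\beta)}(t)-t\,C_{2,(\alpha-1,\beta)}(t)-[\alpha=\beta].
\]
With that correction in place your induction does go through (one checks that $F_\alpha:=\sum_{i=0}^{\min(\alpha,\beta)}t^{\alpha-i}C_2^{\alpha+\beta+1-2i}$ satisfies $F_{\alpha+1}=F_\alpha-tF_{\alpha-1}$ for $\alpha\neq\beta$ and $F_{\alpha+1}=F_\alpha-tF_{\alpha-1}-1$ at $\alpha=\beta$, using $C_2-1=tC_2^2$), and the base cases $\alpha\in\{0,1\}$ are exactly Corollary~\ref{thm2: closed formula corollary 2}.

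So: the induction argument can be made rigorous and is a legitimate alternative proof, but it is longer, requires case-splitting at $\alpha=\beta$, and loses the per-term combinatorial interpretation that the paper's decomposition provides for free.
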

\begin{proof}
For each $n \geq 0$, we partition $\mathcal{D}^2_{n,(\alpha,\beta)}$ into sets $\mathcal{S}_{n,0},\hdots,\mathcal{S}_{n,\min(\alpha,\beta)}$, where $\mathcal{S}_{n,i}$ includes all paths whose lowest point lies along $y=i$.  As shown in Figure \ref{fig2: k=2 decomposition}, every path $P \in S_{i,n}$ may be decomposed into a sequence of $(\alpha-i)+(\beta-i)+1$ subpaths of shape $(0,0)$.  Notice that this decomposition includes $\alpha-i$ ``external" down steps that aren't included within one of the shape-$(0,0)$ subpaths.  If we define the generating function $S_i(t) = \sum_{n \geq 0} \vert \mathcal{S}_{n,i} \vert t^n$, this decomposition implies that $S_i(t)= t^{\alpha-i} C_2(t)^{\alpha+\beta+1-2i}$.
\end{proof}

\begin{figure}[ht!]
\centering
\begin{tikzpicture}
	[scale=.4,auto=left,every node/.style={circle,fill=black,inner sep=0pt,outer sep=0pt}]
	\draw (-2,-1) to (31,-1);
	\node[fill=none] (b0) at (-0.5,3) {\scalebox{1.2}{$\cdots$}};
	\draw[fill=gray!20!](1,3) rectangle (4,6);
	\node[fill=none] (b1) at (2.5,4.5) {\scalebox{1}{$(0,0)$}};
	\draw[thick] (4,3) to (5,2);
	\draw[fill=gray!20!](5,2) rectangle (8,5);
	\node[fill=none] (b2) at (6.5,3.5) {\scalebox{1}{$(0,0)$}};
	\draw[thick] (8,2) to (9,1);
	\node[fill=none] (b3) at (10.5,1) {\scalebox{1.2}{$\cdots$}};	
	\draw[thick] (12,1) to (13,0);
	\draw[fill=gray!20!](13,0) rectangle (16,3);
	\node[fill=none] (b4) at (14.5,1.5) {\scalebox{1}{$(0,0)$}};
	\draw[thick] (16,0) to (17,1);	
	\node[fill=none] (b5) at (18.5,1) {\scalebox{1.2}{$\cdots$}};
	\draw[thick] (20,1) to (21,2);
	\draw[fill=gray!20!](21,2) rectangle (24,5);
	\node[fill=none] (b6) at (22.5,3.5) {\scalebox{1}{$(0,0)$}};
	\draw[thick] (24,2) to (25,3);	
	\draw[fill=gray!20!](25,3) rectangle (28,6);
	\node[fill=none] (b7) at (26.5,4.5) {\scalebox{1}{$(0,0)$}};
	\node[fill=none] (b8) at (29.5,3) {\scalebox{1.2}{$\cdots$}};
	\node[inner sep=1pt] (1) at (1,3) {};
	\node[inner sep=1pt] (2) at (4,3) {};
	\node[inner sep=1pt] (3) at (5,2) {};
	\node[inner sep=1pt] (4) at (8,2) {};
	\node[inner sep=1pt] (5) at (9,1) {};
	\node[inner sep=1pt] (6) at (12,1) {};
	\node[inner sep=1pt] (7) at (13,0) {};
	\node[inner sep=1pt] (8) at (16,0) {};
	\node[inner sep=1pt] (9) at (17,1) {};
	\node[inner sep=1pt] (10) at (20,1) {};
	\node[inner sep=1pt] (11) at (21,2) {};
	\node[inner sep=1pt] (12) at (24,2) {};
	\node[inner sep=1pt] (13) at (25,3) {};
	\node[inner sep=1pt] (14) at (28,3) {};	
\end{tikzpicture}
\caption{The decomposition of a path $P \in \mathcal{D}^2_{n,(\alpha,\beta)}$ into a sequence of $(\alpha-i)+(\beta-i)+1$ subpaths of shape $(0,0)$, as referenced in the proof of Theorem \ref{thm2: k=2 generating function}.}
\label{fig2: k=2 decomposition}
\end{figure}

If $\beta \geq \alpha$, the formula of Theorem \ref{thm2: k=2 generating function} may be rewritten as $C_{2,(\alpha,\beta)}(t) = \sum_{i=0}^{\alpha} t^i C_2(t)^{\beta-\alpha+2i+1}$.  Similarly, if $\alpha \geq \beta$, Theorem \ref{thm2: k=2 generating function} may be rewritten as $C_{2,(\alpha,\beta)}(t) = \sum_{i=0}^{\beta} t^{\alpha-\beta+i} C_2(t)^{\alpha-\beta+2i+1}$.  Together these identities ensure $C_{2,(\beta,\alpha)}(t) = t^{\beta-\alpha} C_{2,(\alpha,\beta)}(t)$, placing Theorem \ref{thm2: k=2 generating function} in agreement with Proposition \ref{thm2: k=2 horizontal reflection}.  

Temporarily restricting our attention to the case of $\beta \geq \alpha$, also note that we may use the identity $C_2(t) = t \kern+1pt C_2(t)^2 + 1$ to rewrite the formula above as

\begin{equation}
\label{eq2: k=2 generating function relations}
C_{2,(\alpha,\beta)}(t) = C_2(t)^{\beta-\alpha+1} \sum_{i=0}^\alpha (t \kern+1pt C_2(t)^2)^i = C_2(t)^{\beta-\alpha+1} \sum_{i=0}^\alpha (C_2(t)-1)^i.
\end{equation}

More significantly, Theorem \ref{thm2: k=2 generating function} may used to develop a closed formula for arbitrary $C^2_{n,(\alpha,\beta)}$, giving a simpler replacement of Theorem \ref{thm2: closed formula} that holds only when $k=2$.

\begin{theorem}
\label{thm2: k=2 closed formula}
For all $n,\alpha,\beta \geq 0$,
$$C^2_{n,(\alpha,\beta)} = \sum_{i=0}^{\min(\alpha,\beta)} \frac{\alpha+\beta+1-2i}{2n+\beta-\alpha+1} \binom{2n+\beta-\alpha+1}{n-\alpha+i}.$$
\end{theorem}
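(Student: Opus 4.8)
The plan is to read off the coefficient of $t^n$ directly from the product expansion supplied by Theorem~\ref{thm2: k=2 generating function}. The one external ingredient needed is the Raney-number coefficient formula recalled in Section~\ref{sec: intro}: for every $r\ge 1$ and $m\ge 0$ one has $[t^m]C_k(t)^r = \frac{r}{km+r}\binom{km+r}{m}$, which in the case $k=2$ becomes $[t^m]C_2(t)^r = \frac{r}{2m+r}\binom{2m+r}{m}$. So the proof should be essentially a substitution exercise, and I expect it to be short.

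Concretely, I would start from $C_{2,(\alpha,\beta)}(t) = \sum_{i=0}^{\min(\alpha,\beta)} t^{\alpha-i} C_2(t)^{\alpha+\beta+1-2i}$ and extract $[t^n]$ from both sides. Because of the leading factor $t^{\alpha-i}$, the $i$-th summand contributes $[t^{n-\alpha+i}]\,C_2(t)^{\alpha+\beta+1-2i}$. Setting $r=\alpha+\beta+1-2i$ and $m=n-\alpha+i$ and invoking the Raney formula, the only computation to perform is $2m+r = 2(n-\alpha+i)+(\alpha+\beta+1-2i) = 2n+\beta-\alpha+1$, which is pleasantly independent of $i$; this is exactly what makes the final sum have a single fixed ``lower denominator.'' Substituting, the $i$-th term becomes $\frac{\alpha+\beta+1-2i}{2n+\beta-\alpha+1}\binom{2n+\beta-\alpha+1}{n-\alpha+i}$, and summing over $i$ from $0$ to $\min(\alpha,\beta)$ yields the asserted formula.

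Two small bookkeeping points should be addressed to keep the argument honest, and these are where whatever ``difficulty'' there is resides. First, the Raney formula as stated requires $r\ge 1$; here $r=\alpha+\beta+1-2i \ge \alpha+\beta+1-2\min(\alpha,\beta) = |\alpha-\beta|+1 \ge 1$, so this is automatic. Second, the formula is usually stated for $m\ge 0$, whereas some summands may have $m=n-\alpha+i<0$; for those indices the right-hand side vanishes by the convention $\binom{a}{b}=0$ for $b<0$, and the corresponding coefficient $[t^{n-\alpha+i}]C_2(t)^r$ is likewise $0$, so the two sides still agree term by term and no term needs to be excluded from the displayed sum. I do not anticipate a genuine obstacle — the result is a direct coefficient extraction from Theorem~\ref{thm2: k=2 generating function}, cleaner than specializing the general Theorem~\ref{thm2: closed formula} to $k=2$ — the only mildly delicate part is tracking the exponent shift $t^{\alpha-i}$ correctly and confirming that the degenerate negative-index summands behave consistently on both sides.
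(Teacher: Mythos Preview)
Your proposal is correct and follows exactly the paper's own proof: extract $[t^n]$ from the expansion of Theorem~\ref{thm2: k=2 generating function} and apply the Raney coefficient formula term by term, noting that $2m+r$ simplifies to $2n+\beta-\alpha+1$ independently of $i$. Your additional bookkeeping checks ($r\ge 1$ and the negative-index summands) are sound and in fact slightly more careful than what the paper spells out.
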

\begin{proof}
Fixing $n \geq 0$ and applying the definition of Raney numbers, we have $[t^n] t^{\alpha-i} C_2(t)^{\alpha+\beta+1-2i} = [t^{n-\alpha+i}]C_2(t)^{\alpha+\beta+1-2i} = \frac{\alpha+\beta+1-2i}{2(n-\alpha+i)+(\alpha+\beta+1-2i)} \binom{2(n-\alpha+i)+(\alpha+\beta+1-2i}{n-\alpha+i}$.  Our closed formula for the $C^2_{n,(\alpha,\beta)} = [t^n]C_{2,(\alpha,\beta)}(t)$ then follows from the summation of Theorem \ref{thm2: k=2 generating function}.
\end{proof}

\section{Raised k-Dyck paths, filtered by minimum height and returns}
\label{sec: raised Dyck paths by returns}

For the rest of this paper, we focus upon the enumeration of raised $k$-Dyck paths that satisfy additional conditions.  We begin by developing formulas for the number of paths $P \in \mathcal{D}^k_{n,(\alpha,\beta)}$ that have a fixed minimum height and paths $P \in \mathcal{D}^k_{n,(\alpha,\beta)}$ that have a certain number of ``returns to ground".  Enumerating raised $k$-Dyck paths that have a fixed maximum height is delayed until Section \ref{sec: Dyck paths of bounded height}.

\subsection{Raised k-Dyck paths, by minimum height}
\label{subsec: raised Dyck paths by minimum height}

For traditional $k$-Dyck paths, all of which necessarily begin at height $y=0$, it is unnecessary to categorize paths according to their minimum $y$-coordinate.  For raised $k$-Dyck paths of shape $(\alpha,\beta)$, this question becomes non-trivial when both $\alpha > 0$ and $\beta > 0$.

Take any path $P \in \mathcal{D}^k_{n,(\alpha,\beta)}$.  If $P$ stays weakly above $y=m$, we say that $P$ is bounded from below by $m$.  Then let $\mathcal{L}^{k,m}_{n,(\alpha,\beta)}$ denote the collection of all $P \in \mathcal{D}^k_{n,(\alpha,\beta)}$ that are bounded from below by $m$.  For any such set, there exists a clear bijection between $\mathcal{L}^{k,m}_{n,(\alpha,\beta)}$ and $\mathcal{D}^k_{n,(\alpha-m,\beta-m)}$ whereby paths in $\mathcal{L}^{k,m}_{n,(\alpha,\beta)}$ are shifted $m$ units downward.  As such, we focus upon enumerating paths that actually obtain a fixed minimum height.

So once again take $P \in \mathcal{D}^k_{n,(\alpha,\beta)}$.  If $P$ is bounded from below by $m$ yet is not bounded from below by $m+1$, meaning that $m$ is the minimum $y$-coordinate among all points $(x_i,y_i)$ along $P$, we say that $P$ has a minimum height of $m$.  Then let ${_m}\mathcal{D}^k_{n,(\alpha,\beta)}$ to denote the set of all raised $k$-Dyck paths of length $kn+\beta-\alpha$ and shape $(\alpha,\beta)$ with minimum height $m$, and set $\vert {_m}\mathcal{D}^k_{n,(\alpha,\beta)} \vert = {_m}C^k_{n,(\alpha,\beta)}$.  For fixed shape $(\alpha,\beta)$ and fixed $m$, define the generating function ${_m}C_{k,(\alpha,\beta)}(t) = \sum_{n \geq 0} {_m}C^k_{n,(\alpha,\beta)} t^n$.

Obviously, all $P \in \mathcal{D}^k_{n,(\alpha,\beta)}$ have a minimum height that falls in the range $0 \leq m \leq \min(\alpha,\beta)$.  It follows that $\mathcal{D}^k_{n,(\alpha,\beta)} = \bigcup_{i=0}^{\min(\alpha,\beta)} {_m}\mathcal{D}^k_{n,(\alpha,\beta)}$ and hence that $C_{k,(\alpha,\beta)}(t) = \sum_{i=0}^{\min(\alpha,\beta)} {_m}C_{k,(\alpha,\beta)}(t)$ for all $k \geq 2$ and all shapes $(\alpha,\beta)$.  By construction, we also have ${_m}\mathcal{D}^k_{n,(\alpha,\beta)} = \mathcal{L}^{k,m}_{n,(\alpha,\beta)} - \mathcal{L}^{k,m+1}_{n,(\alpha,\beta)}$.  Using the bijection for the $\mathcal{L}^{k,m}_{n,(\alpha,\beta)}$ mentioned above, this final fact gives:

\begin{proposition}
\label{thm3: minimum height proposition}
For any $k \geq 2$, $n,\alpha,\beta \geq 0$ and $0 \leq m \leq \min(\alpha,\beta)$,
$${_m}C^k_{n,(\alpha,\beta)} = C^k_{n,(\alpha-m,\beta-m)} - C^k_{n,(\alpha-m-1,\beta-m-1)}.$$
\end{proposition}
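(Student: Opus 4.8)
The plan is to assemble the proposition directly from the two structural facts recorded immediately before its statement, so essentially no new combinatorics is needed. First I would recall the translation bijection in detail: sending each $P \in \mathcal{L}^{k,m}_{n,(\alpha,\beta)}$ to its shift by $(0,-m)$ preserves the step set $\{U,D\}$ and the length $kn+\beta-\alpha = kn+(\beta-m)-(\alpha-m)$, sends the start and end heights $\alpha,\beta$ to $\alpha-m,\beta-m$, and turns ``weakly above $y=m$'' into ``weakly above $y=0$''. Hence it is a bijection onto $\mathcal{D}^k_{n,(\alpha-m,\beta-m)}$, and $\vert \mathcal{L}^{k,m}_{n,(\alpha,\beta)} \vert = C^k_{n,(\alpha-m,\beta-m)}$ whenever $0 \le m \le \min(\alpha,\beta)$, which guarantees both shifted coordinates are nonnegative.

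Next I would invoke the set identity ${_m}\mathcal{D}^k_{n,(\alpha,\beta)} = \mathcal{L}^{k,m}_{n,(\alpha,\beta)} \setminus \mathcal{L}^{k,m+1}_{n,(\alpha,\beta)}$ stated above, together with the evident containment $\mathcal{L}^{k,m+1}_{n,(\alpha,\beta)} \subseteq \mathcal{L}^{k,m}_{n,(\alpha,\beta)}$ (a path weakly above $y=m+1$ is a fortiori weakly above $y=m$). Taking cardinalities of a set difference of nested sets gives ${_m}C^k_{n,(\alpha,\beta)} = \vert \mathcal{L}^{k,m}_{n,(\alpha,\beta)} \vert - \vert \mathcal{L}^{k,m+1}_{n,(\alpha,\beta)} \vert$, and substituting the formula from the first step into each term yields ${_m}C^k_{n,(\alpha,\beta)} = C^k_{n,(\alpha-m,\beta-m)} - C^k_{n,(\alpha-m-1,\beta-m-1)}$, as desired.

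The only point requiring a word of care is the boundary case $m = \min(\alpha,\beta)$: then the first step's bijection does not literally apply to $\mathcal{L}^{k,m+1}_{n,(\alpha,\beta)}$ because one of $\alpha-m-1,\beta-m-1$ is negative. But there every path of shape $(\alpha,\beta)$ has an endpoint at height $\min(\alpha,\beta)=m<m+1$, so $\mathcal{L}^{k,m+1}_{n,(\alpha,\beta)} = \emptyset$, consistent with the standing convention $C^k_{n,(\alpha-m-1,\beta-m-1)} = 0$ for a negative coordinate. I would also check the degenerate instance $n=0$, $\alpha=\beta$, where the trivial path has minimum height $\alpha$: the formula returns $C^k_{0,(0,0)}-C^k_{0,(-1,-1)} = 1-0 = 1$ when $m=\alpha$ and $C^k_{0,(\alpha-m,\alpha-m)}-C^k_{0,(\alpha-m-1,\alpha-m-1)} = 1-1 = 0$ when $m<\alpha$, both correct. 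I do not anticipate any real obstacle here; the statement is a bookkeeping consequence of the preceding setup, and the above edge-case verification is the only thing worth spelling out.
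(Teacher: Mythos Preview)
Your proposal is correct and follows exactly the approach the paper takes: the paper states the proposition as an immediate consequence of the preceding translation bijection $\mathcal{L}^{k,m}_{n,(\alpha,\beta)} \cong \mathcal{D}^k_{n,(\alpha-m,\beta-m)}$ together with the set identity ${_m}\mathcal{D}^k_{n,(\alpha,\beta)} = \mathcal{L}^{k,m}_{n,(\alpha,\beta)} \setminus \mathcal{L}^{k,m+1}_{n,(\alpha,\beta)}$, without writing out any further argument. Your explicit boundary-case checks (for $m=\min(\alpha,\beta)$ and for the trivial path) go slightly beyond what the paper records but are the natural verifications one would want.
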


The drawback with Proposition \ref{thm3: minimum height proposition} is that it relies upon the extremely lengthy formula of Theorem \ref{thm2: closed formula}.  This motivates the alternative characterization of ${_m}C^k_{n,(\alpha,\beta)}$ given below, which has the added benefit of relating all our results to enumerations of raised $k$-Dyck paths of shape $(\alpha,0)$.

\begin{theorem}
\label{thm3: minimum height generating function}
For any $k \geq 2$, $\alpha,\beta \geq 0$ and $0 \leq m \leq \min(\alpha,\beta)$,
$${_m}C_{k,(\alpha,\beta)}(t) = C_{k,(\alpha-m,0)}(t) \kern+1pt C_k(t)^{\beta-m}.$$
\end{theorem}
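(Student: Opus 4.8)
The plan is to prove the identity combinatorially, first peeling off the dependence on $m$ and then reducing everything to a single ``last return to ground'' decomposition.

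\textbf{Step 1 (reduce to $m=0$).} Because $m\le\min(\alpha,\beta)$, the heights $\alpha-m$ and $\beta-m$ are nonnegative, so the translation sending a path $m$ units downward — the very bijection already invoked for the sets $\mathcal{L}^{k,m}_{n,(\alpha,\beta)}$ — carries ${_m}\mathcal{D}^k_{n,(\alpha,\beta)}$ bijectively onto ${_0}\mathcal{D}^k_{n,(\alpha-m,\beta-m)}$ while preserving the number of $D$ steps (a path whose lowest point lies on $y=m$ becomes one whose lowest point lies on $y=0$). Hence ${_m}C_{k,(\alpha,\beta)}(t)={_0}C_{k,(\alpha-m,\beta-m)}(t)$, and it suffices to prove ${_0}C_{k,(\alpha,\beta)}(t)=C_{k,(\alpha,0)}(t)\,C_k(t)^{\beta}$ for all $\alpha,\beta\ge 0$.

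\textbf{Step 2 (last-return decomposition).} A path $P\in{_0}\mathcal{D}^k_{n,(\alpha,\beta)}$ is exactly a shape-$(\alpha,\beta)$ path that meets the line $y=0$; since it does meet $y=0$, it has a well-defined rightmost lattice point on that line, at some abscissa $x=j$. Split $P=P_1P_2$ there. Then $P_1$ is an arbitrary path from $(0,\alpha)$ to $(j,0)$ staying weakly above $y=0$, i.e.\ an arbitrary element of $\mathcal{D}^k_{n_1,(\alpha,0)}$, and $P_2$ is a path from $(j,0)$ to $(kn+\beta-\alpha,\beta)$ that stays weakly above $y=0$ and touches $y=0$ only at its initial point. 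Conversely, concatenating any such pair recovers a path in ${_0}\mathcal{D}^k_{n,(\alpha,\beta)}$ whose last ground contact is the join point — this is precisely why $P_2$ is required to avoid $y=0$ after its start — and $n=n_1+n_2$. This is a bijection. For the tail, if $\beta=0$ the only path avoiding $y=0$ after its start is the empty path, contributing $1=C_k(t)^0$; if $\beta\ge 1$, then $P_2$ has positive length and must begin with a $U$ step (a $D$ step from height $0$ would leave the region), so deleting that step and translating down one unit gives a bijection onto $\mathcal{D}^k_{n_2,\beta-1}$, whose cardinalities are $[t^{n_2}]C_k(t)^{\beta}$ and whose $D$-count agrees with that of $P_2$. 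In every case the generating function enumerating admissible tails by their number of $D$ steps is $C_k(t)^{\beta}$. Since the $D$-count is additive under concatenation, the bijection yields ${_0}C_{k,(\alpha,\beta)}(t)=C_{k,(\alpha,0)}(t)\,C_k(t)^{\beta}$, and combining with Step 1 finishes the proof.

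\textbf{Main obstacle.} There is essentially no computation; the care goes into the boundary cases — the empty path, $\alpha=0$ (where $P_1$ is a shape-$(0,0)$ path and may itself be empty), $\beta=0$ (where $P_2$ is forced to be empty), and the trivial length-$0$ path when $\alpha=\beta=m$ and $n=0$ — and into verifying that the ``rightmost point on $y=0$'' is genuinely well-defined and that the concatenation in Step 2 reassembles $P$ with the join as its last ground contact. One could instead derive the result from Proposition~\ref{thm3: minimum height proposition} by establishing $C_{k,(a,b)}(t)-C_{k,(a-1,b-1)}(t)=C_{k,(a,0)}(t)C_k(t)^{b}$ (with the convention $C_{k,(a,-1)}(t)=0$), but the decomposition above seems the cleanest route and is the one I would write up.
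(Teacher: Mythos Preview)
Your proof is correct and follows essentially the same approach as the paper: both arguments split a minimum-height-$m$ path at its rightmost point on the line $y=m$, identifying the left piece with a shape-$(\alpha-m,0)$ path and the right piece (after stripping its initial $U$ step) with a shape-$(0,\beta-m-1)$ path enumerated by $C_k(t)^{\beta-m}$. The only cosmetic difference is that the paper works directly at height $m$ with ``effective shapes'' in a figure, whereas you insert an explicit Step~1 translation to reduce to $m=0$ first; the underlying decomposition and the treatment of the $m=\beta$ boundary case are the same.
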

\begin{proof}
As shown in Figure \ref{fig3: minimum height decomposition}, every path $P \in {_m}\mathcal{D}^k_{n,(\alpha,\beta)}$ may be decomposed according to the rightmost point at its minimum height of $y=m$.  When $0 \leq m < \beta$, this decomposition gives the relationship ${_m}C_{k,(\alpha,\beta)}(t) = C_{k,(\alpha-m,0)}(t) \kern+1pt C_{k,(0,\beta-m-1)}(t)$.  When $m = \beta$, we have the relationship ${_m}C_{k,(\alpha,\beta)}(t) = C_{k,(\alpha-m,0)}(t)$.  Both cases simplify to the stated equation.
\end{proof}

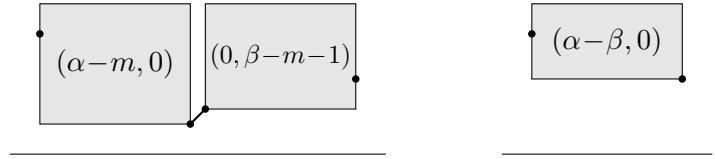
\begin{figure}[ht!]
\centering
\begin{tikzpicture}
	[scale=.4,auto=left,every node/.style={circle,fill=black,inner sep=0pt,outer sep=0pt}]
	\draw (0,0) to (12.5,0);
	\draw[fill=gray!20!](1,1) rectangle (6,5);
	\node[fill=none] (b1) at (3.5,3) {\scalebox{1}{$(\alpha \kern-2pt - \kern-2pt m,0)$}};
	\draw[thick] (6,1) to (6.5,1.5);
	\draw[fill=gray!20!](6.5,1.5) rectangle (11.5,5);
	\node[fill=none] (b2) at (9,3.25) {\scalebox{.9}{$(0,\beta \kern-2pt - \kern-2pt m \kern-2pt - \kern-2pt 1)$}};	
	\node[inner sep=1pt] (a) at (1,4) {};
	\node[inner sep=1pt] (b) at (6,1) {};
	\node[inner sep=1pt] (b) at (6.5,1.5) {};
	\node[inner sep=1pt] (b) at (11.5,2.5) {};
\end{tikzpicture}
\hspace{.5in}
\begin{tikzpicture}
	[scale=.4,auto=left,every node/.style={circle,fill=black,inner sep=0pt,outer sep=0pt}]
	\draw (0,0) to (7,0);
	\draw[fill=gray!20!](1,2.5) rectangle (6,5);
	\node[fill=none] (b1) at (3.5,3.75) {\scalebox{1}{$(\alpha \kern-2pt - \kern-2pt \beta,0)$}};
	\node[inner sep=1pt] (a) at (1,4) {};
	\node[inner sep=1pt] (b) at (6,2.5) {};
\end{tikzpicture}
\caption{The two possible decompositions of a path $P \in {_m}\mathcal{D}^k_{n,(\alpha,\beta)}$ with minimum height $m$, one for $m < \beta$ (left) and one for $m = \beta$ (right).  The $(a_i,b_i)$ denote the effective shape of each subpath.}
\label{fig3: minimum height decomposition}
\end{figure}

Avoiding the even more involved calculation suggested by Proposition \ref{thm3: minimum height proposition}, we use Theorem \ref{thm3: minimum height generating function} to develop an (admittedly still inelegant) closed formula for the ${_m}C^k_{n,(\alpha,\beta)}$:

\begin{theorem}
\label{thm3: minimum height closed formula}
For any $k \geq 2$, $n,\alpha,\beta \geq 0$, and $m \leq 0 \leq \min(\alpha,\beta)$,
$${_m}C^k_{n,(\alpha,\beta)} = \left( \sum_{i \geq 0} (-1)^i \frac{2\beta - m + 1}{k(n-i) + 2\beta-m+1} \binom{k(n-i)+2\beta-m+1}{n-i} \binom{\alpha-(k-1)i}{i} \right)$$
$$- \left( \sum_{i \geq 0} (-1)^i \frac{\beta-m}{k(n-i)+\beta-m} \binom{k(n-i)+\beta-m}{n-i} \binom{\alpha-m-1-(k-1)i}{i} \right).$$
\end{theorem}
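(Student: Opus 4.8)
The plan is to bypass the lengthy formula of Theorem~\ref{thm2: closed formula} and instead extract the coefficient $[t^n]$ directly from the product in Theorem~\ref{thm3: minimum height generating function}. Starting from ${_m}C_{k,(\alpha,\beta)}(t)=C_{k,(\alpha-m,0)}(t)\,C_k(t)^{\beta-m}$, I would first rewrite the factor $C_{k,(\alpha-m,0)}(t)$ using the right-hand side of \eqref{eq2: closed formula proof 2} specialized to $\beta=0$ with $\alpha$ replaced by $\alpha-m$: this presents $C_{k,(\alpha-m,0)}(t)$ as a power of $C_k(t)$ times the explicit alternating series $\sum_{i\ge0}(-1)^i\binom{\alpha-m-(k-1)i}{i}t^i$, minus a second explicit series $\sum_{i\ge0}(-1)^i\binom{\alpha-m-1-(k-1)i}{i}t^i$. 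Multiplying through by $C_k(t)^{\beta-m}$ then writes ${_m}C_{k,(\alpha,\beta)}(t)$ as a difference of two products, each of the form: an explicit alternating series in $t^i$ times a power of $C_k(t)$.

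The second step is to recall, as in Section~\ref{sec: intro}, that $[t^j]C_k(t)^r=R_{k,r}(j)=\tfrac{r}{kj+r}\binom{kj+r}{j}$, so that each of the two products is an ordinary Cauchy product of two power series with known coefficients. Reading off $[t^n]$ from each product then amounts to a single-index convolution sum, and combining the two contributions yields the claimed closed formula for ${_m}C^k_{n,(\alpha,\beta)}$. An equally good route, which I would run in parallel as a check, starts instead from Proposition~\ref{thm3: minimum height proposition} and applies Theorem~\ref{thm2: closed formula} to each of $C^k_{n,(\alpha-m,\beta-m)}$ and $C^k_{n,(\alpha-m-1,\beta-m-1)}$; since their trailing terms both equal $-(-1)^n\binom{\alpha-\beta-1-(k-1)n}{n}$, these cancel in the difference and one is again left with exactly two summations.

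I expect the only real difficulty to be bookkeeping at the ends of the index ranges. One must check that the conventions $\binom{a}{b}=0$ for $a<0$ or $b<0$, the effective finiteness of the $i$-sums, and the reading $[t^0]C_k(t)^0=1$ (equivalently $R_{k,0}(0)=1$) are applied consistently — most notably in the degenerate cases $m=\beta$, where the factor $C_k(t)^{\beta-m}$ collapses to $1$, and $\alpha=m$, where $C_{k,(\alpha-m,0)}(t)=C_k(t)$ and only the $i=0$ term of each sum survives. Beyond that no genuinely new idea is needed: the statement is just the coefficientwise unpacking of Theorem~\ref{thm3: minimum height generating function} via facts already recorded in Section~\ref{sec: intro}.
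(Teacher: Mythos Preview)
Your proposal is correct and follows essentially the same route as the paper: start from the generating function identity of Theorem~\ref{thm3: minimum height generating function}, expand $C_{k,(\alpha-m,0)}(t)$ via \eqref{eq2: closed formula proof 2} specialized to ending height~$0$, multiply through by $C_k(t)^{\beta-m}$, and then read off $[t^n]$ using the Raney identity $[t^j]C_k(t)^r=\tfrac{r}{kj+r}\binom{kj+r}{j}$ to resolve each product as a single-index convolution. Your parallel check via Proposition~\ref{thm3: minimum height proposition} (noting that the trailing terms from Theorem~\ref{thm2: closed formula} cancel) is a nice additional sanity check not present in the paper's proof.
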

\begin{proof}
Applying \eqref{eq2: closed formula proof 2} from the proof of Theorem \ref{thm2: closed formula} to our identity from Theorem \ref{thm3: minimum height generating function}, we see that ${_m}C^k_{n,(\alpha,\beta)} = C_k(t)^{\beta-m} \kern+1pt C_{k,(\alpha-m,0)}(t)$ may be rewritten as
\begin{equation}
\label{eq3: minimum height closed formula proof}
C_k(t)^{2\beta-m+1} \sum_{i \geq 0} (-1)^i \binom{\alpha-m-(k-1)i}{i} t^i - C_k(t)^{\beta-m} \sum_{i \geq 0} (-1)^i \binom{\alpha-m-1-(k-1)i}{i} t^i.
\end{equation}
Recalling the standard identity $C_k(t)^r = \sum_{i \geq 0} \frac{r}{ki+r} \binom{ki+r}{i}t^i$, both of the terms from \eqref{eq3: minimum height closed formula proof} are transformed into convolutions, from which the two summations of the theorem may be extracted.
\end{proof}

As was the case in Section \ref{sec: raised Dyck paths}, all of these formulas become much simpler when we restrict our attention to small $\alpha$ or to $k=2$.  When $\alpha - m \leq k-1$ we may apply Corollary \ref{thm2: closed formula corollary 2} to the $C_{k,(\alpha-m,0)}(t)$ term from Theorem \ref{thm3: minimum height generating function} and derive the following:

\begin{corollary}
\label{thm3: minimum height corollary 1}
For any $k \geq 2$, $n,\beta \geq 0$, and $m,\alpha \geq 0$ such that $0 \leq \alpha - m \leq k-1$,
$${_m}C^k_{n,(\alpha,\beta)} =
\begin{cases}
\frac{\beta-m+1}{kn+\beta-m+1} \binom{kn+\beta-m+1}{n} & \text{ if } m = \alpha,\\
\frac{\beta-m+1}{kn+\beta-m+1} \binom{kn+\beta-m+1}{n} - \frac{\beta-m}{kn+\beta-m} \binom{kn+\beta-m}{n} & \text{ if } m < \alpha \leq k-1+m. 
\end{cases}$$
\end{corollary}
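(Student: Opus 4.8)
The plan is to combine the generating-function factorization of Theorem~\ref{thm3: minimum height generating function} with the explicit description of $C_{k,(\alpha-m,0)}(t)$ coming from Corollary~\ref{thm2: closed formula corollary 2}, and then to extract coefficients using the Fuss--Catalan identity $[t^n]C_k(t)^r = \frac{r}{kn+r}\binom{kn+r}{n}$ recalled in Section~\ref{sec: intro}. Throughout we work in the regime $0 \le m \le \min(\alpha,\beta)$ that is in force in Theorem~\ref{thm3: minimum height generating function} (outside it the set ${_m}\mathcal{D}^k_{n,(\alpha,\beta)}$ is empty), so in particular $\alpha - m \ge 0$ and $\beta - m \ge 0$.

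First I would invoke Theorem~\ref{thm3: minimum height generating function} to write ${_m}C_{k,(\alpha,\beta)}(t) = C_{k,(\alpha-m,0)}(t)\,C_k(t)^{\beta-m}$. Since the hypothesis gives $0 \le \alpha - m \le k-1$, Corollary~\ref{thm2: closed formula corollary 2} applies to the first factor with its ``$\alpha$'' taken to be $\alpha-m$ and its ``$\beta$'' taken to be $0$. This produces exactly the two subcases appearing in the statement: if $m = \alpha$ then $\alpha - m = 0$, which is $\le 0$, so $C_{k,(0,0)}(t) = C_k(t)^{0+1} = C_k(t)$; and if $m < \alpha \le k-1+m$ then $\alpha - m \ge 1 > 0$, so $C_{k,(\alpha-m,0)}(t) = C_k(t)^{0+1} - 1 = C_k(t) - 1$. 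Substituting back, ${_m}C_{k,(\alpha,\beta)}(t)$ equals $C_k(t)^{\beta-m+1}$ in the first subcase and $C_k(t)^{\beta-m+1} - C_k(t)^{\beta-m}$ in the second.

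It then only remains to read off $[t^n]$ of these two power series. Applying $[t^n]C_k(t)^r = \frac{r}{kn+r}\binom{kn+r}{n}$ with $r = \beta-m+1$ (and, in the second subcase, additionally with $r = \beta-m$) yields precisely the two displayed formulas. I expect no real obstacle here; the only point deserving a word of care is the boundary instance $m = \beta$ of the second subcase, where $C_k(t)^{\beta-m} = C_k(t)^0 = 1$ and hence the subtracted term $\frac{\beta-m}{kn+\beta-m}\binom{kn+\beta-m}{n}$ must be interpreted as $[t^n]\,1 = \delta_{n,0}$ (the value one obtains by letting $r \to 0$ in the Raney expression), after which the stated identity is again exact.
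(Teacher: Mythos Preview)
Your argument is correct and follows essentially the same route as the paper: apply Theorem~\ref{thm3: minimum height generating function} to factor ${_m}C_{k,(\alpha,\beta)}(t)$, evaluate $C_{k,(\alpha-m,0)}(t)$ via Corollary~\ref{thm2: closed formula corollary 2} in the two subcases, and extract coefficients using the Raney identity. Your extra remark about the boundary case $m=\beta$ (where $C_k(t)^{\beta-m}=1$ and the Raney expression must be read as $\delta_{n,0}$) is a welcome clarification that the paper's proof leaves implicit.
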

\begin{proof}
By Corollary \ref{thm2: closed formula corollary 2} and Theorem \ref{thm3: minimum height generating function}, when $m = \alpha$ we have $C_{k,(\alpha-m,0)}(t) = C_k(t)$ and thus that ${_m}C_{k,(\alpha,\beta)}(t) = C_k(t)^{\beta-m+1}$.  Similarly, when $m < \alpha < k-1+m$ we have $C_{k,(\alpha-m,0)}(t) = C_k(t)-1$ and thus that ${_m}C_{k,(\alpha,\beta)}(t) = C_k(t)^{\beta-m+1} - C_k(t)^{\beta-m}$.  Our closed formulas then follow from the identity $[t^n]C_k(t)^r = \frac{r}{kn+r} \binom{kn+r}{n}$.
\end{proof}

As for the $k=2$ case, in the course of proving Theorem \ref{thm2: k=2 generating function} we already enumerated paths in $C^2_{n,(\alpha,\beta)}$ with minimal height $m$.  It may be verified that the formula below corresponds to $[t^n] t^{\alpha-m} C_2(t)^{\alpha+\beta+1-2m} = [t^n] C_{2,(\alpha-m)}(t) C_2(t)^{\beta-m}$, placing it in agreement with Theorem \ref{thm3: minimum height generating function}.

\begin{corollary}
\label{thm3: minimum height corollary 2}
For any $n,\alpha,\beta \geq 0$ and $0 \leq m \leq \min(\alpha,\beta)$,
$${_m}C^2_{n,(\alpha,\beta)} = \frac{\alpha+\beta+1-2m}{2n+\beta-\alpha+1} \binom{2n+\beta-\alpha+1}{n-\alpha+m}.$$
\end{corollary}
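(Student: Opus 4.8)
The plan is to obtain the stated formula by a short coefficient extraction, leaning entirely on machinery already established: the $k=2$ specialization of Theorem \ref{thm3: minimum height generating function} together with the $k=2$ generating function of Theorem \ref{thm2: k=2 generating function}.

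First I would specialize Theorem \ref{thm3: minimum height generating function} to $k=2$, which gives ${_m}C_{2,(\alpha,\beta)}(t) = C_{2,(\alpha-m,0)}(t)\, C_2(t)^{\beta-m}$; this is legitimate since $0 \le m \le \min(\alpha,\beta)$ forces both $\alpha-m \ge 0$ and $\beta-m \ge 0$. Next I would evaluate $C_{2,(\alpha-m,0)}(t)$ via Theorem \ref{thm2: k=2 generating function} applied to the shape $(\alpha-m,0)$: because $\min(\alpha-m,0)=0$, the sum there collapses to its single $i=0$ term, yielding $C_{2,(\alpha-m,0)}(t) = t^{\alpha-m}\,C_2(t)^{\alpha-m+1}$. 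Substituting produces the clean identity $${_m}C_{2,(\alpha,\beta)}(t) = t^{\alpha-m}\,C_2(t)^{\alpha+\beta-2m+1}.$$ I would also remark (as the paper already hints) that this same generating function is exactly the class-$\mathcal{S}_{n,m}$ generating function $S_m(t)$ produced inside the proof of Theorem \ref{thm2: k=2 generating function}, since $\mathcal{S}_{n,m} = {_m}\mathcal{D}^2_{n,(\alpha,\beta)}$; so one could read the identity directly off that decomposition and bypass Theorem \ref{thm3: minimum height generating function} entirely.

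The remaining step is routine. Writing $j = n-\alpha+m$, we have $[t^n]\,t^{\alpha-m}C_2(t)^{\alpha+\beta-2m+1} = [t^{\,j}]\,C_2(t)^{\alpha+\beta-2m+1}$, and applying the Raney-number identity $[t^j]C_k(t)^r = \frac{r}{kj+r}\binom{kj+r}{j}$ with $k=2$ and $r = \alpha+\beta-2m+1$ gives $\frac{r}{2j+r}\binom{2j+r}{j}$. Simplifying the index, $2j+r = 2(n-\alpha+m)+(\alpha+\beta-2m+1) = 2n+\beta-\alpha+1$, which turns this into exactly $\frac{\alpha+\beta+1-2m}{2n+\beta-\alpha+1}\binom{2n+\beta-\alpha+1}{n-\alpha+m}$, as claimed. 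I would close by noting that this result could instead be recovered by setting $k=2$ in Theorem \ref{thm3: minimum height closed formula} and collapsing both summations, but that the generating-function route above is shorter and makes the consistency with Theorem \ref{thm3: minimum height generating function} immediate.

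I do not expect a substantive obstacle here; the only thing requiring a little care is the bookkeeping around the convention $\binom{a}{b}=0$ for negative $a$ or $b$. When $n < \alpha-m$ the exponent $j$ is negative, the left side ${_m}C^2_{n,(\alpha,\beta)}$ is $0$ because $t^{\alpha-m}C_2(t)^{\alpha+\beta-2m+1}$ has order $\alpha-m$, and the right-hand binomial $\binom{2n+\beta-\alpha+1}{n-\alpha+m}$ correctly vanishes — so I would spend one sentence confirming that the formula reads correctly in this degenerate range rather than grinding through it.
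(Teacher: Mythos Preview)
Your proof is correct and matches the paper's own justification: the paper observes that the decomposition in the proof of Theorem~\ref{thm2: k=2 generating function} already gives ${_m}C_{2,(\alpha,\beta)}(t) = t^{\alpha-m}C_2(t)^{\alpha+\beta+1-2m}$ (and notes its agreement with Theorem~\ref{thm3: minimum height generating function}), then reads off the coefficient via the Raney identity. Your route through Theorem~\ref{thm3: minimum height generating function} followed by the single-term evaluation of $C_{2,(\alpha-m,0)}(t)$ lands on exactly the same generating function, and you even flag the shortcut through $\mathcal{S}_{n,m}$ that the paper takes as primary.
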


One unrelated consequence of Theorem \ref{thm3: minimum height generating function} is the following decomposition of $C_{k,(\alpha,\beta)}(t)$ into a sum that is indexed by minimal height:

\begin{equation}
\label{eq3: minimum height decomp of total generating function}
C_{k,(\alpha,\beta)}(t) = \sum_{i=0}^{\min(\alpha,\beta)} C_{k,(\alpha-m,0)}(t) \kern+1pt C_k(t)^{\beta-m}.
\end{equation}

Comparison of Proposition \ref{thm3: minimum height proposition} and Theorem \ref{thm3: minimum height generating function} also gives an unexpected equation whereby shape $(\alpha,\beta)$ paths may enumerated in terms of paths with shapes of the form $(\alpha',0)$ and $(0,\beta')$.

\begin{corollary}
\label{thm3: generating function in terms of (alpha,0)}
For all $k \geq 2$ and $\alpha,\beta \geq 0$,
$$C_{k,(\alpha,\beta)}(t) = \sum_{i=0}^{\min(\alpha,\beta)} C_{k,(\alpha-i,0)}(t) \kern+1pt C_k(t)^{\beta-i}.$$
\end{corollary}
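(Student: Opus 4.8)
The plan is to deduce Corollary~\ref{thm3: generating function in terms of (alpha,0)} from the minimum-height stratification already developed in Section~\ref{subsec: raised Dyck paths by minimum height}, so that the identity becomes essentially a rephrasing of Theorem~\ref{thm3: minimum height generating function}. Recall that every path in $\mathcal{D}^k_{n,(\alpha,\beta)}$ attains a unique minimum height $m$ with $0 \le m \le \min(\alpha,\beta)$, giving the disjoint union $\mathcal{D}^k_{n,(\alpha,\beta)} = \bigsqcup_{m=0}^{\min(\alpha,\beta)} {_m}\mathcal{D}^k_{n,(\alpha,\beta)}$ and hence, on the level of generating functions, $C_{k,(\alpha,\beta)}(t) = \sum_{m=0}^{\min(\alpha,\beta)} {_m}C_{k,(\alpha,\beta)}(t)$. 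Substituting ${_m}C_{k,(\alpha,\beta)}(t) = C_{k,(\alpha-m,0)}(t)\,C_k(t)^{\beta-m}$ from Theorem~\ref{thm3: minimum height generating function} into each term and relabeling the summation index $m$ as $i$ produces exactly the claimed formula; this is the content already recorded in \eqref{eq3: minimum height decomp of total generating function}.

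To match the description of the corollary as arising from a ``comparison'' of Proposition~\ref{thm3: minimum height proposition} with Theorem~\ref{thm3: minimum height generating function}, I would also present the alternative telescoping derivation. Taking $m=0$ in Proposition~\ref{thm3: minimum height proposition} and summing over $n$ gives ${_0}C_{k,(\alpha,\beta)}(t) = C_{k,(\alpha,\beta)}(t) - C_{k,(\alpha-1,\beta-1)}(t)$, while $m=0$ in Theorem~\ref{thm3: minimum height generating function} gives ${_0}C_{k,(\alpha,\beta)}(t) = C_{k,(\alpha,0)}(t)\,C_k(t)^{\beta}$. Equating the two expressions yields the one-step recurrence
\[
C_{k,(\alpha,\beta)}(t) = C_{k,(\alpha-1,\beta-1)}(t) + C_{k,(\alpha,0)}(t)\,C_k(t)^{\beta}.
\]
Iterating this recurrence $\min(\alpha,\beta)$ times peels off the terms $C_{k,(\alpha-i,0)}(t)\,C_k(t)^{\beta-i}$ for $i=0,1,\dots,\min(\alpha,\beta)-1$ and leaves a single residual term, namely $C_{k,(0,\beta-\alpha)}(t)$ if $\alpha \le \beta$ or $C_{k,(\alpha-\beta,0)}(t)$ if $\alpha \ge \beta$. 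Using $C_{k,(0,h)}(t) = C_k(t)^{h+1}$ and $C_{k,(0,0)}(t) = C_k(t)$, this residual equals the $i=\min(\alpha,\beta)$ term of the asserted sum, which closes the argument.

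There is no substantive difficulty here; the work is entirely bookkeeping. The only points needing care are the boundary cases: in the telescoping one must check that the running shape index stays within the range $0 \le m \le \min(\alpha,\beta)$ for which Proposition~\ref{thm3: minimum height proposition} and Theorem~\ref{thm3: minimum height generating function} are valid (which is exactly why the iteration terminates after $\min(\alpha,\beta)$ steps), one must invoke the identifications $C_{k,(0,0)}(t) = C_k(t)$ and $C_{k,(0,h)}(t) = C_k(t)^{h+1}$ to recognize the final residual term, and one should note the degenerate case $\alpha = \beta$, where the sum simply collapses onto its top term. In the writeup I would give the short direct proof from \eqref{eq3: minimum height decomp of total generating function} as the main argument and record the telescoping computation as an explanatory remark.
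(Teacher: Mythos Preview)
Your proposal is correct and essentially matches the paper. Your ``alternative telescoping derivation'' is exactly the paper's proof of the corollary: specialize Proposition~\ref{thm3: minimum height proposition} and Theorem~\ref{thm3: minimum height generating function} to $m=0$, equate to obtain $C_{k,(\alpha,\beta)}(t) = C_{k,(\alpha,0)}(t)\,C_k(t)^{\beta} + C_{k,(\alpha-1,\beta-1)}(t)$, and iterate until one coordinate hits $0$. Your proposed ``main argument'' via the disjoint minimum-height stratification is precisely the content the paper records just before the corollary as \eqref{eq3: minimum height decomp of total generating function}; so you have simply reversed the emphasis between the two derivations that the paper itself gives.
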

\begin{proof}
Equating the right sides of Theorem \ref{thm3: minimum height generating function} and (a generating function-equivalent version of) Proposition \ref{thm3: minimum height proposition} when $m=0$ gives the relation below, which holds whenever $\alpha > 0$ and $\beta > 0$:
\begin{equation}
\label{eq3: generating function in terms of (alpha,0) proof}
C_{k,(\alpha,\beta)}(t) = C_{k,(\alpha,0)}(t) \kern+1pt C_k(t)^\beta + C_{k,(\alpha-1,\beta-1)}(t).
\end{equation}
Repeated application of this relation until $\alpha=0$ or $\beta=0$ yields the desired equation.
\end{proof}

\subsection{Raised k-Dyck paths, by returns}
\label{subsec: raised Dyck paths by returns}

Our next goal is to enumerate paths $P \in \mathcal{D}^k_{n,(\alpha,\beta)}$ with a specific number of ``returns to ground".  By a return to ground, we mean a $D$ step whose right endpoint lies on the line $y=0$.  When $\alpha=0$, the initial point $(0,0)$ of a path does not qualify as a return to ground.

Denote the set of all raised $k$-Dyck paths of length $kn+\beta-\alpha$ and shape $(\alpha,\beta)$ with precisely $\rho$ returns to ground by $\mathcal{D}^k_{n,(\alpha,\beta),\rho}$, and let $\vert \mathcal{D}^k_{n,(\alpha,\beta),\rho} \vert = C^k_{n,(\alpha,\beta),\rho}$.  As every path in $\mathcal{D}^k_{n,(\alpha,\beta)}$ contains precisely $n$ down steps, $C^k_{n,(\alpha,\beta),\rho} = 0$ if $\rho > n$.  When $\alpha > 0$, we may have $C^k_{n,(\alpha,\beta),\rho} = 0$ even if $\rho \leq n$.

In this section it is once again beneficial to preemptively fix a shape $(\alpha,\beta)$ and deal with the generating functions $C_{k,(\alpha,\beta)}(t) = [q^\alpha r^\beta]C_k(q,r,t)$.  Filtering by the number of returns, we then define $C_{k,(\alpha,\beta)}(t,z) = \sum_{n,\rho \geq 0} C^k_{n,(\alpha,\beta),\rho} t^n z^\rho$.

In the classic case of $\alpha=0$, we quickly recap the standard result.  Here, every path in $\mathcal{D}^k_{n,(0,\beta),\rho}$ may be decomposed according to its returns as in Figure \ref{fig3: return decomposition traditional}.  This decomposition gives

\begin{proposition}
\label{thm3: generating function by returns, alpha=0}
For any $k \geq 2$ and $\beta \geq 0$,

$$C_{k,(0,\beta)}(t,z) = \sum_{i \geq 0} z^i t^i C_k(t)^{\beta + i(k-1)} .$$

\end{proposition}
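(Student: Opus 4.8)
The plan is to prove the formula by the standard ``return decomposition'' of a $k$-Dyck path, reading off a generating-function factor (in $t$, with $z$ marking returns) from each block. First I would fix the number of returns $\rho = i$ and take a path $P \in \mathcal{D}^k_{n,(0,\beta),i}$. The key structural observation is that, apart from the initial vertex $(0,0)$, every visit of $P$ to the line $y=0$ must be reached by a $D$ step (there is nothing below $y=0$ from which a $U$ step could arrive); since a $D$ step from height $h$ lands at $h-(k-1)$, such a step must in fact come from height exactly $k-1$. Consequently $P$ meets $y=0$ at precisely the $i+1$ points consisting of the origin together with the $i$ returns, and cutting $P$ at these points expresses it uniquely as a concatenation $A_1 A_2 \cdots A_i\, Q$, where each $A_j$ is a $k$-Dyck excursion from height $0$ back to height $0$ whose interior stays strictly above $y=0$, and $Q$ is the (possibly empty) final block running from height $0$ to height $\beta$ with interior strictly above $y=0$. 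This is exactly the decomposition depicted in Figure~\ref{fig3: return decomposition traditional}.

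Next I would compute the generating function of each block. An interior block $A_j$ ends with a $D$ step (by the observation above) and, starting at height $0$, must begin with a $U$ step; stripping off this leading $U$ and trailing $D$ and translating down by one unit sets up a bijection between excursions $A_j$ of $n$-index $m$ and elements of $\mathcal{D}^k_{m-1,(0,k-2)}$, a set enumerated by $[t^{m-1}]C_k(t)^{k-1}$. Hence a single interior block is generated by $t\,C_k(t)^{k-1}$, and since it carries exactly one return, its $(t,z)$-weight is $z\,t\,C_k(t)^{k-1}$. The final block $Q$ contributes no return, and when nonempty it begins with a $U$ step; stripping that $U$ and shifting down gives a bijection between such blocks of $n$-index $n'$ and elements of $\mathcal{D}^k_{n',(0,\beta-1)}$, so $Q$ is generated by $C_k(t)^{\beta}$ (which equals $1$ exactly in the degenerate situations $\beta=0$ or $Q$ empty). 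Multiplying the $i$ interior factors by the final factor yields $z^i t^i C_k(t)^{i(k-1)}\cdot C_k(t)^{\beta} = z^i t^i C_k(t)^{\beta+i(k-1)}$, and summing over $i \geq 0$ gives the asserted identity for $C_{k,(0,\beta)}(t,z)$.

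The step I would be most careful about is the boundary bookkeeping: checking that the cut-and-shift maps really are bijections onto the advertised sets of shape $(0,k-2)$ and $(0,\beta-1)$ — in particular that the length and terminal-height congruences work out so no spurious objects appear — and that the degenerate cases are handled correctly, namely $i=0$ (where $P=Q$ is the whole path) and $\beta=0$ (where $Q$ is forced to be empty whenever $i\geq 1$, and the $i$-excursion string must itself be empty when additionally $i=0$). None of this is deep: once the single fact that a vertex of a $k$-Dyck path lying on $y=0$ past the origin is always reached by a $D$ step from height $k-1$ is pinned down, the remaining verifications are routine, and the formula in the statement is precisely what the product-and-sum produces.
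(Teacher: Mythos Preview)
Your argument is correct and is essentially the same as the paper's: both use the return decomposition of Figure~\ref{fig3: return decomposition traditional}, peeling off a leading $U$ and trailing $D$ from each excursion to obtain a shape-$(0,k-2)$ subpath (contributing $t\,C_k(t)^{k-1}$ per return) and a leading $U$ from the final block to obtain a shape-$(0,\beta-1)$ subpath (contributing $C_k(t)^{\beta}$). Your write-up is simply more explicit about the bijections and the degenerate cases than the paper's one-line proof.
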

\begin{proof}
For paths $P \in \mathcal{D}^k_{n,(0,\beta)}$ with precisely $\rho$ returns, the decomposition of Figure \ref{fig3: return decomposition traditional} yields the generating function $C_{k,(0,\beta-1)}(t) \left( t \kern+1pt C_{k,(0,k-2)}(t) \right)^\rho = t^\rho C_k(t)^\beta \left( C_k(t)^{k-1} \right)^\rho$.
\end{proof}

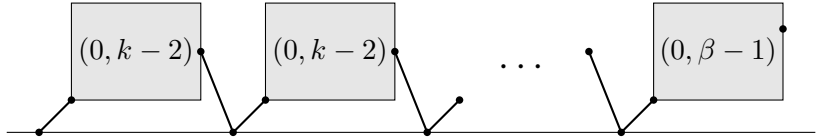
\begin{figure}[ht!]
\centering
\begin{tikzpicture}
	[scale=.43,auto=left,every node/.style={circle,fill=black,inner sep=0pt,outer sep=0pt}]
	\draw (-1,0) to (24,0);
	\draw[thick] (0,0) to (1,1);
	\draw[fill=gray!20!](1,1) rectangle (5,4);
	\node[fill=none] (b1) at (3,2.5) {\scalebox{1}{$(0,k-2)$}};
	\draw[thick] (5,2.5) to (6,0);
	\draw[thick] (6,0) to (7,1);
	\draw[fill=gray!20!](7,1) rectangle (11,4);
	\node[fill=none] (b2) at (9,2.5) {\scalebox{1}{$(0,k-2)$}};	
	\draw[thick] (11,2.5) to (12,0);
	\draw[thick] (12,0) to (13,1);
	\node[fill=none] (dots) at (15,2) {\scalebox{1.35}{$\cdots$}};
	\draw[thick] (17,2.5) to (18,0);
	\draw[thick] (18,0) to (19,1);
	\draw[fill=gray!20!](19,1) rectangle (23,4);
	\node[fill=none] (b3) at (21,2.5) {\scalebox{1}{$(0,\beta-1)$}};
	\node[inner sep=1pt] (1) at (0,0) {};
	\node[inner sep=1pt] (2) at (1,1) {};
	\node[inner sep=1pt] (3) at (5,2.5) {};
	\node[inner sep=1pt] (4) at (6,0) {};
	\node[inner sep=1pt] (5) at (7,1) {};
	\node[inner sep=1pt] (6) at (11,2.5) {};	
	\node[inner sep=1pt] (7) at (12,0) {};
	\node[inner sep=1pt] (8) at (13,1) {};
	\node[inner sep=1pt] (9) at (17,2.5) {};
	\node[inner sep=1pt] (10) at (18,0) {};
	\node[inner sep=1pt] (10) at (19,1) {};
	\node[inner sep=1pt] (11) at (23,3.2) {};
\end{tikzpicture}
\caption{The general form of a path $P \in \mathcal{D}^k_{n,(0,\beta)}$ with precisely $\rho$ returns to ground, along with the effective shape of each subpath.}
\label{fig3: return decomposition traditional}
\end{figure}

\begin{theorem}
\label{thm3: closed formula by returns, alpha=0}
For any $k \geq 2$ and $\beta,n,\rho \geq 0$,

$$C^k_{n,(0,\beta),\rho} = \frac{k\rho+\beta-\rho}{kn+\beta-\rho} \binom{kn+\beta-\rho}{n-\rho}.$$
\end{theorem}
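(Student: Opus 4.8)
The plan is a direct coefficient extraction from the bivariate generating function already computed in Proposition \ref{thm3: generating function by returns, alpha=0}. First I would recall that, by definition, $C^k_{n,(0,\beta),\rho}$ is the coefficient of $t^n z^\rho$ in $C_{k,(0,\beta)}(t,z) = \sum_{i \geq 0} z^i t^i C_k(t)^{\beta + i(k-1)}$. Since the variable $z$ appears only through the factor $z^i$, the coefficient of $z^\rho$ picks out exactly the $i=\rho$ summand, so $C^k_{n,(0,\beta),\rho} = [t^n]\, t^\rho C_k(t)^{\beta+\rho(k-1)} = [t^{n-\rho}]\, C_k(t)^{\beta+\rho(k-1)}$.

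Next I would invoke the Raney-number identity recalled in Section \ref{sec: intro}, namely $[t^m] C_k(t)^r = \frac{r}{km+r}\binom{km+r}{m}$, applied with $r = \beta + \rho(k-1)$ and $m = n-\rho$. A one-line simplification gives $km + r = k(n-\rho) + \beta + \rho(k-1) = kn + \beta - \rho$ and $r = \beta + \rho(k-1) = k\rho + \beta - \rho$, so that $[t^{n-\rho}] C_k(t)^{\beta+\rho(k-1)} = \frac{k\rho+\beta-\rho}{kn+\beta-\rho}\binom{kn+\beta-\rho}{n-\rho}$, which is precisely the asserted formula.

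The only point deserving a remark is the degenerate range $\rho > n$: there $m = n-\rho < 0$, and the formula correctly returns $0$ because $\binom{kn+\beta-\rho}{n-\rho} = 0$ under the convention $\binom{a}{b}=0$ for $b<0$ fixed in Section \ref{sec: raised Dyck paths}, in agreement with the earlier observation that $C^k_{n,(\alpha,\beta),\rho}=0$ whenever $\rho>n$. There is no real obstacle here; the content of the theorem lies entirely in Proposition \ref{thm3: generating function by returns, alpha=0}, and what remains is a routine extraction of coefficients together with the arithmetic check that the two exponents collapse to $kn+\beta-\rho$ and $k\rho+\beta-\rho$ as claimed.
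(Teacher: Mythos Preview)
Your proof is correct and follows exactly the same approach as the paper: extract the $z^\rho$ coefficient from Proposition \ref{thm3: generating function by returns, alpha=0} to obtain $[t^{n-\rho}]C_k(t)^{\beta+\rho(k-1)}$, then apply the Raney identity. You have simply written out the arithmetic simplification and the $\rho>n$ boundary case more explicitly than the paper does.
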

\begin{proof}
By Proposition \ref{thm3: generating function by returns, alpha=0}, $C^k_{n,(0,\beta),\rho} = [t^n] t^\rho C_k(t)^{\beta + \rho(k-1)} = [t^{n-\rho}]C_k(t)^{\beta+\rho(k-1)}$. 
\end{proof}

The case of $\alpha > 0$ is similar yet slightly more complex, seeing as elements of $\mathcal{D}^k_{n,(\alpha,\beta),\rho}$ need not have a return to ground.  This necessitates two distinct decompositions for elements of $\mathcal{D}^k_{n,(\alpha,\beta),\rho}$, both of which are shown in Figure \ref{fig3: return decomposition raised}.  As with Proposition \ref{thm3: generating function by returns, alpha=0}, this decomposition prompts

\begin{proposition}
\label{thm3: generating function by returns, alpha>0}
For any $k \geq 2$ and $\beta \geq 0$ with $\alpha > 0$,

$$C_{k,(\alpha,\beta)}(t,z) = C_{k,(\alpha-1,\beta-1)}(t) + \sum_{i \geq 1} z^i t^i C_{k,(\alpha-1,k-2)}(t) \kern+1pt C_k(t)^{\beta + (i-1)(k-1)} .$$

\end{proposition}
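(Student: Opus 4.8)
The plan is to partition $\mathcal{D}^k_{n,(\alpha,\beta)}$ according to whether a path has any return to ground, mirroring the decomposition in Figure~\ref{fig3: return decomposition raised} and the proof of Proposition~\ref{thm3: generating function by returns, alpha=0}. The one structural observation that drives everything is this: since $\alpha > 0$, no lattice point of a path $P \in \mathcal{D}^k_{n,(\alpha,\beta)}$ other than a return endpoint can lie on the line $y = 0$. Indeed, a point at height $0$ is the right endpoint of some step; it cannot be the right endpoint of a $U$ step (whose left endpoint would then be at height $-1$), so it must be the right endpoint of a $D$ step, i.e.\ a return. Consequently $P$ has zero returns if and only if $P$ stays weakly above $y = 1$.

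For the $z^0$ contribution, shifting a returnless path down one unit gives a bijection between the returnless paths of shape $(\alpha,\beta)$ and $\mathcal{D}^k_{n,(\alpha-1,\beta-1)}$, contributing the term $C_{k,(\alpha-1,\beta-1)}(t)$. (When $\beta = 0$ this generating function is $0$ by our sign conventions, which is correct: a path ending at height $0$ with $\alpha>0$ must have a return.) For the paths with $\rho \geq 1$ returns, I would cut at the first return. By the observation above, the initial segment of $P$ up to the left endpoint of the first return step stays weakly above $y=1$ and ends at height $k-1$; shifted down one unit it is a raised $k$-Dyck path of shape $(\alpha-1,k-2)$, contributing $C_{k,(\alpha-1,k-2)}(t)$. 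The first return is a single $D$ step, contributing $tz$. The terminal segment begins at height $0$, ends at height $\beta$, and has exactly $\rho-1$ returns, so it is a raised $k$-Dyck path of shape $(0,\beta)$ with $\rho-1$ returns; by Proposition~\ref{thm3: generating function by returns, alpha=0} the generating function of such segments (weighted by $z^{\rho-1}$) is $z^{\rho-1}t^{\rho-1}C_k(t)^{\beta+(\rho-1)(k-1)}$. Multiplying the three factors and summing over $\rho \geq 1$ (re-indexed as $i$) gives $\sum_{i\geq1} z^i t^i C_{k,(\alpha-1,k-2)}(t)\,C_k(t)^{\beta+(i-1)(k-1)}$; adding the returnless term yields the claimed formula.

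The main thing to be careful about is checking that ``cut at the first return'' is genuinely a bijection onto the three-fold product, i.e.\ that gluing an arbitrary shifted-up raised $k$-Dyck path of shape $(\alpha-1,k-2)$, a $D$ step, and an arbitrary raised $k$-Dyck path of shape $(0,\beta)$ with $\rho-1$ returns always produces a valid raised $k$-Dyck path whose \emph{first} return is the inserted $D$ step and whose total return count is exactly $\rho$. This is immediate from the first paragraph's observation: the shifted-up prefix lies weakly above $y = 1$, so it contributes no return and the inserted step is the first return, while every return of the glued path thereafter corresponds to a return of the terminal segment. Once this is spelled out, the rest is just bookkeeping of the $t$- and $z$-exponents, exactly as in Proposition~\ref{thm3: generating function by returns, alpha=0}.
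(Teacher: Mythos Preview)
Your proof is correct and follows essentially the same decomposition as the paper (Figure~\ref{fig3: return decomposition raised}): returnless paths shifted down give the $C_{k,(\alpha-1,\beta-1)}(t)$ term, and paths with $\rho\geq 1$ returns are split at return points. The only cosmetic difference is that the paper cuts at \emph{every} return simultaneously, obtaining $t^\rho\, C_{k,(\alpha-1,k-2)}(t)\,(C_k(t)^{k-1})^{\rho-1}\,C_k(t)^{\beta}$ directly, whereas you cut only at the \emph{first} return and invoke Proposition~\ref{thm3: generating function by returns, alpha=0} for the tail; the two computations are algebraically identical.
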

\begin{proof}
The first term corresponds to the first decomposition in Figure \ref{fig3: return decomposition raised}.  The sum corresponds to the second decomposition in Figure \ref{fig3: return decomposition raised}, where paths $P \in \mathcal{D}^k_{n,(\alpha,\beta)}$ with $\rho$ returns have generating function $C_{k,(\alpha-1,k-2)}(t) \left(t \kern+1pt C_{k,(0,k-2)}(t) \right)^{\rho-1}  t \kern+1pt C_{k,(0,\beta-1)}(t) = t^\rho C_{k,(\alpha-1,k-2)}(t) \left( C_k(t)^{k-1} \right)^{\rho-1} C_k(t)^\beta$.
\end{proof}

\begin{figure}[ht!]
\centering
\begin{tikzpicture}
	[scale=.43,auto=left,every node/.style={circle,fill=black,inner sep=0pt,outer sep=0pt}]
	\draw (-1,0) to (5,0);
	\draw[fill=gray!20!](0,1) rectangle (4,4);
	\node[fill=none] (b1) at (2,2.5) {\scalebox{.8}{$(\alpha \kern-2pt - \kern-2pt 1,\beta \kern-2pt - \kern-2pt 1)$}};
	\node[inner sep=1pt] (1) at (0,2) {};
	\node[inner sep=1pt] (2) at (4,3.2) {};
\end{tikzpicture}
\hspace{.5in}
\begin{tikzpicture}
	[scale=.43,auto=left,every node/.style={circle,fill=black,inner sep=0pt,outer sep=0pt}]
	\draw (-1,0) to (24,0);
	\draw[fill=gray!20!](1,1) rectangle (5,4);
	\node[fill=none] (b1) at (3,2.5) {\scalebox{.8}{$(\alpha \kern-2pt - \kern-2pt 1,k \kern-2pt - \kern-2pt 2)$}};
	\draw[thick] (5,2.5) to (6,0);
	\draw[thick] (6,0) to (7,1);
	\draw[fill=gray!20!](7,1) rectangle (11,4);
	\node[fill=none] (b2) at (9,2.5) {\scalebox{1}{$(0,k-2)$}};	
	\draw[thick] (11,2.5) to (12,0);
	\draw[thick] (12,0) to (13,1);
	\node[fill=none] (dots) at (15,2) {\scalebox{1.35}{$\cdots$}};
	\draw[thick] (17,2.5) to (18,0);
	\draw[thick] (18,0) to (19,1);
	\draw[fill=gray!20!](19,1) rectangle (23,4);
	\node[fill=none] (b3) at (21,2.5) {\scalebox{1}{$(0,\beta-1)$}};
	\node[inner sep=1pt] (2) at (1,2) {};
	\node[inner sep=1pt] (3) at (5,2.5) {};
	\node[inner sep=1pt] (4) at (6,0) {};
	\node[inner sep=1pt] (5) at (7,1) {};
	\node[inner sep=1pt] (6) at (11,2.5) {};	
	\node[inner sep=1pt] (7) at (12,0) {};
	\node[inner sep=1pt] (8) at (13,1) {};
	\node[inner sep=1pt] (9) at (17,2.5) {};
	\node[inner sep=1pt] (10) at (18,0) {};
	\node[inner sep=1pt] (10) at (19,1) {};
	\node[inner sep=1pt] (11) at (23,3.2) {};
\end{tikzpicture}
\caption{The two possible decompositions for a path $P \in \mathcal{D}^k_{n,(\alpha,\beta)}$ with $\alpha > 0$, one for paths with no returns (left) and one for paths with precisely $\rho>0$ returns (right).}
\label{fig3: return decomposition raised}
\end{figure}
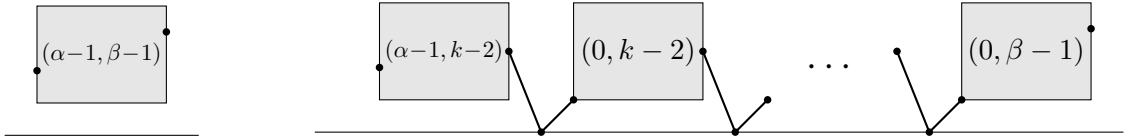

\begin{theorem}
\label{thm3: closed formula by returns, alpha>0}
For any $k \geq 2$ and $\beta,n,\rho \geq 0$ with $\alpha > 0$,

$$C^k_{n,(\alpha,\beta),\rho} =
\begin{cases}
C^k_{n,(\alpha-1,\beta-1)} & \text{ if } \rho=0,\\[+8pt]
\displaystyle{\sum_{i=0}^{n-\rho} C^k_{i,(\alpha-1,k-2)} R_{k,\beta+(\rho-1)(k-1)}(n-\rho-i)} & \text{ if } \rho >0.
\end{cases}$$
\end{theorem}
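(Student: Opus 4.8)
The plan is to read off both branches directly from the generating-function identity of Proposition~\ref{thm3: generating function by returns, alpha>0}, since by definition $C^k_{n,(\alpha,\beta),\rho} = [t^n z^\rho] C_{k,(\alpha,\beta)}(t,z)$ and that proposition already expresses
$$C_{k,(\alpha,\beta)}(t,z) = C_{k,(\alpha-1,\beta-1)}(t) + \sum_{i \geq 1} z^i t^i\, C_{k,(\alpha-1,k-2)}(t)\, C_k(t)^{\beta + (i-1)(k-1)}.$$
So the whole argument is a coefficient extraction. For $\rho = 0$: the summation contributes only monomials $z^i$ with $i \geq 1$, so the $z^0$-part of $C_{k,(\alpha,\beta)}(t,z)$ is exactly $C_{k,(\alpha-1,\beta-1)}(t)$, giving $C^k_{n,(\alpha,\beta),0} = [t^n] C_{k,(\alpha-1,\beta-1)}(t) = C^k_{n,(\alpha-1,\beta-1)}$, which is the first branch.

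For $\rho > 0$: the first term $C_{k,(\alpha-1,\beta-1)}(t)$ contributes nothing to $z^\rho$, and within the summation exactly one index, $i = \rho$, produces a factor $z^\rho$. Extracting $z^\rho$ leaves $[t^n]\, t^\rho\, C_{k,(\alpha-1,k-2)}(t)\, C_k(t)^{\beta + (\rho-1)(k-1)} = [t^{n-\rho}]\, C_{k,(\alpha-1,k-2)}(t)\, C_k(t)^{\beta + (\rho-1)(k-1)}$. I would then expand each factor: $C_{k,(\alpha-1,k-2)}(t) = \sum_{i \geq 0} C^k_{i,(\alpha-1,k-2)}\, t^i$ by definition, and $C_k(t)^{\beta+(\rho-1)(k-1)} = \sum_{j \geq 0} R_{k,\beta+(\rho-1)(k-1)}(j)\, t^j$ using the standard identity $[t^j]C_k(t)^r = R_{k,r}(j)$ recalled in Section~\ref{sec: intro}. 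Taking the Cauchy product of these two series and reading off the coefficient of $t^{n-\rho}$ yields exactly $\sum_{i=0}^{n-\rho} C^k_{i,(\alpha-1,k-2)}\, R_{k,\beta+(\rho-1)(k-1)}(n-\rho-i)$, the second branch. (Note that when $n < \rho$ this is an empty sum, consistent with $C^k_{n,(\alpha,\beta),\rho} = 0$ for $\rho > n$.)

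The computation is routine, so there is no real obstacle; the only point requiring care is the degenerate exponent $r = \beta + (\rho-1)(k-1) = 0$, which occurs precisely when $\beta = 0$ and $\rho = 1$. There $C_k(t)^0 = 1$, so one must interpret $R_{k,0}(j)$ as $[t^j] 1$, i.e. $R_{k,0}(0) = 1$ and $R_{k,0}(j) = 0$ for $j > 0$ (the defining formula for $R_{k,r}$ is stated only for $r \geq 1$). With this reading the convolution collapses to $C^k_{n-1,(\alpha-1,k-2)}$, which matches the direct extraction $[t^{n-1}] C_{k,(\alpha-1,k-2)}(t)$, so the formula remains valid. Beyond flagging this edge case and confirming that $i = \rho$ is the unique surviving term of the sum, nothing further is needed.
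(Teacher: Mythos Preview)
Your argument is correct and follows essentially the same route as the paper: extract the $z^\rho$ coefficient from Proposition~\ref{thm3: generating function by returns, alpha>0} to obtain $[t^{n-\rho}]\,C_{k,(\alpha-1,k-2)}(t)\,C_k(t)^{\beta+(\rho-1)(k-1)}$, then read this as a Cauchy product. Your added remark on the degenerate exponent $r=0$ when $\beta=0$, $\rho=1$ is a useful clarification that the paper omits.
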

\begin{proof}
Using Proposition \ref{thm3: generating function by returns, alpha>0}, $C^k_{n,(\alpha,\beta),0} = [t^n]C_{k,(\alpha-1,\beta-1)}(t)$ when $\rho = 0$.  For $\rho > 0$ we have
\begin{equation}
\label{eq3: returns proof 1}
C^k_{n,(\alpha,\beta),\rho} = [t^n] t^\rho C_{k,(\alpha-1,k-2)}(t) \kern+1pt C_k(t)^{\beta + (\rho-1)(k-1)} = [t^{n-\rho}]C_{k,(\alpha-1,k-2)}(t) \kern+1pt C_k(t)^{\beta+(\rho-1)(k-1)}.
\end{equation}
\end{proof}

Given the complexity of the formula from Theorem \ref{thm2: closed formula}, substituting closed formulas into Theorem \ref{thm3: closed formula by returns, alpha>0} becomes very lengthy for arbitrary $(\alpha,\beta)$.  However, when $0 < \alpha \leq k$, we can apply Corollary \ref{thm2: closed formula corollary 1} (or Corollary \ref{thm2: closed formula corollary 2}) to arrive at the much simpler identity shown below.

\begin{corollary}
\label{thm3: closed formula by returns, corollary 1}
For any $k \geq 2$ and $\beta,n,\rho \geq 0$ with $0 < \alpha \leq k$,

$$C^k_{n,(\alpha,\beta),\rho} =
\begin{cases}
\frac{k\rho+\beta-\rho}{kn+\beta-\rho} \binom{kn+\beta-\rho}{n-\rho} & \text{ if } 0 < \alpha \leq k-1,\\[+12pt]
\frac{k\rho+\beta-\rho}{kn+\beta-\rho} \binom{kn+\beta-\rho}{n-\rho} - \frac{k \rho+\beta-\rho-(k-1)}{kn+\beta-\rho-(k-1)} \binom{kn+\beta-\rho-(k-1)}{n-\rho} & \text{ if } \alpha = k.
\end{cases}$$
\end{corollary}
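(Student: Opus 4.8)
The plan is to start from the generating-function identity of Proposition~\ref{thm3: generating function by returns, alpha>0}, which is available since $0<\alpha\le k$, and then to simplify its shape-$(\alpha-1,\cdot)$ factors using the observation that $0\le\alpha-1\le k-1$ — exactly the range covered by Corollary~\ref{thm2: closed formula corollary 2}. Concretely, $C_{k,(\alpha-1,k-2)}(t)=C_k(t)^{k-1}$ when $\alpha-1\le k-2$ (i.e.\ $\alpha\le k-1$), while $C_{k,(\alpha-1,k-2)}(t)=C_k(t)^{k-1}-1$ when $\alpha-1=k-1$ (i.e.\ $\alpha=k$); this dichotomy is precisely the case split in the statement. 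After substituting, collecting the coefficient of $z^\rho t^n$ reduces everything to extracting coefficients of powers of $C_k(t)$, for which we use the standard identity $[t^m]C_k(t)^r=R_{k,r}(m)=\tfrac{r}{km+r}\binom{km+r}{m}$.

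For $0<\alpha\le k-1$ the sum in Proposition~\ref{thm3: generating function by returns, alpha>0} becomes $\sum_{i\ge1}z^it^iC_k(t)^{k-1}C_k(t)^{\beta+(i-1)(k-1)}=\sum_{i\ge1}z^it^iC_k(t)^{\beta+i(k-1)}$, i.e.\ exactly the $i\ge1$ part of the expansion of $C_{k,(0,\beta)}(t,z)$ furnished by Proposition~\ref{thm3: generating function by returns, alpha=0}; the $\rho=0$ term is handled separately by the first branch of Theorem~\ref{thm3: closed formula by returns, alpha>0} together with Corollary~\ref{thm2: closed formula corollary 1}. Hence, for $\rho>0$, $C^k_{n,(\alpha,\beta),\rho}=[t^{n-\rho}]C_k(t)^{\beta+\rho(k-1)}=R_{k,\beta+\rho(k-1)}(n-\rho)$, and the identities $\beta+\rho(k-1)=k\rho+\beta-\rho$ and $k(n-\rho)+\beta+\rho(k-1)=kn+\beta-\rho$ turn this into the first displayed formula. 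Equivalently, for $\alpha\le k-1$ one may invoke the returns-preserving bijection $\mathcal{D}^k_{n,(\alpha,\beta),\rho}\to\mathcal{D}^k_{n,(0,\beta),\rho}$ obtained by prepending $\alpha$ consecutive up-steps (valid precisely because any path leaving height $0$ must begin with at least $k-1$ up-steps) and then quote Theorem~\ref{thm3: closed formula by returns, alpha=0}.

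For $\alpha=k$, the extra $-1$ in $C_{k,(k-1,k-2)}(t)=C_k(t)^{k-1}-1$ contributes an additional term $-\sum_{i\ge1}z^it^iC_k(t)^{\beta+(i-1)(k-1)}$ to the generating function; reindexing $i\mapsto i+1$ identifies this with $-zt\,C_{k,(0,\beta)}(t,z)$. Therefore, for $\rho>0$, $C^k_{n,(k,\beta),\rho}=C^k_{n,(0,\beta),\rho}-C^k_{n-1,(0,\beta),\rho-1}$, and plugging Theorem~\ref{thm3: closed formula by returns, alpha=0} into both terms — performing the substitution $n\mapsto n-1$, $\rho\mapsto\rho-1$ in the second and simplifying the Fuss--Catalan prefactor and binomial — yields exactly $\frac{k\rho+\beta-\rho}{kn+\beta-\rho}\binom{kn+\beta-\rho}{n-\rho}-\frac{k\rho+\beta-\rho-(k-1)}{kn+\beta-\rho-(k-1)}\binom{kn+\beta-\rho-(k-1)}{n-\rho}$. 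The $\rho=0$ subcase follows once more from the first branch of Theorem~\ref{thm3: closed formula by returns, alpha>0}, via $C^k_{n,(k,\beta),0}=C^k_{n,(k-1,\beta-1)}$ and Corollary~\ref{thm2: closed formula corollary 1}.

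I expect the fiddliest part to be the $\alpha=k$, $\rho>0$ computation: one must carry the $-1$ from Corollary~\ref{thm2: closed formula corollary 2} through the double sum, reindex correctly, and verify that the shift $(n,\rho)\mapsto(n-1,\rho-1)$ inside $R_{k,\,k\rho+\beta-\rho}(\cdot)$ reproduces the claimed second summand verbatim. Low values of $n$ and $\rho$ (and the case $\beta=0$) should also be checked directly against the convention $\binom{a}{b}=0$ for $a<0$ to make sure no spurious terms survive.
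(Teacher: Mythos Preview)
Your approach is essentially the same as the paper's: you apply Corollary~\ref{thm2: closed formula corollary 2} to the factor $C_{k,(\alpha-1,k-2)}(t)$ in the generating-function identity (equivalently, in equation~\eqref{eq3: returns proof 1} from the proof of Theorem~\ref{thm3: closed formula by returns, alpha>0}), observe that the dichotomy $\alpha-1\le k-2$ versus $\alpha-1=k-1$ yields exactly the two cases, and then extract the $t^{n-\rho}$ coefficient via the Raney formula. Your write-up is considerably more explicit than the paper's one-sentence proof---you spell out the reindexing for $\alpha=k$, handle the $\rho=0$ case separately, and note the alternative returns-preserving bijection for $\alpha\le k-1$---but the underlying argument is identical.
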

\begin{proof}
Applying Corollary \ref{thm2: closed formula corollary 2} to the $C_{k,(\alpha-1,k-2)}(t)$ terms of Theorem \ref{thm3: closed formula by returns, alpha>0}, note that $\alpha - 1 \leq k-2$ implies $\alpha \leq k-1$, whereas $\alpha-1 > k-2$ along with $\alpha-1 \leq k -1$ together imply $\alpha = k$.
\end{proof}

As expected, the $k=2$ case is also comparatively succinct.  Not at all expected is that a specialization of Theorem \ref{thm3: closed formula by returns, alpha>0} to $k=2$ gives a simpler result when $\rho > 0$ than when $\rho = 0$.

\begin{corollary}
\label{thm3: closed formula by returns, corollary 2}
For any $\beta,n,\rho \geq 0$ with $\alpha > 0$,
$$C^k_{n,(\alpha,\beta),\rho} = 
\begin{cases}
\displaystyle{\sum_{i=0}^{\min(\alpha-1,\beta-1)}} \frac{\alpha+\beta-1-2i}{2n+\beta-\alpha+1} \binom{2n+\beta-\alpha+1}{n-\alpha+1+i} & \text{ if } \rho = 0, \\[+20pt]
\displaystyle{\frac{\alpha+\beta+\rho-1}{2n+\beta-\alpha-\rho+1} \binom{2n+\beta-\alpha-\rho+1}{n-\alpha-\rho+1}} & \text{ if } \rho > 0.
\end{cases}$$
\end{corollary}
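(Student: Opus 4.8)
The plan is to obtain both branches directly from Theorem~\ref{thm3: closed formula by returns, alpha>0} specialized to $k=2$, feeding in the $k=2$ closed formula (Theorem~\ref{thm2: k=2 closed formula}) for the $\rho=0$ case and the $k=2$ generating function (Theorem~\ref{thm2: k=2 generating function}) together with the Raney-number identity for the $\rho>0$ case. No new combinatorics is needed; the work is entirely a matter of re-indexing the existing formulas.

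For $\rho=0$, Theorem~\ref{thm3: closed formula by returns, alpha>0} already gives $C^2_{n,(\alpha,\beta),0}=C^2_{n,(\alpha-1,\beta-1)}$. I would simply substitute $\alpha\mapsto\alpha-1$ and $\beta\mapsto\beta-1$ into the formula of Theorem~\ref{thm2: k=2 closed formula}. Using $(\alpha-1)+(\beta-1)+1-2i=\alpha+\beta-1-2i$, $(\beta-1)-(\alpha-1)+1=\beta-\alpha+1$, and $n-(\alpha-1)+i=n-\alpha+1+i$, this is immediately the claimed $\rho=0$ expression, and $\min(\alpha-1,\beta-1)$ is already the upper limit appearing in Theorem~\ref{thm2: k=2 closed formula}.

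For $\rho>0$ the derivation is shorter but hinges on an observation particular to $k=2$. From \eqref{eq3: returns proof 1} with $k=2$ (so that $k-2=0$), $C^2_{n,(\alpha,\beta),\rho}=[t^{n-\rho}]\,C_{2,(\alpha-1,0)}(t)\,C_2(t)^{\beta+\rho-1}$. The key step is that the single-shape generating function $C_{2,(\alpha-1,0)}(t)$ collapses to a monomial times a power of $C_2(t)$: since $\min(\alpha-1,0)=0$, Theorem~\ref{thm2: k=2 generating function} (equivalently Proposition~\ref{thm2: k=2 horizontal reflection} combined with \eqref{eq1: shape (0,h) closed formula}) gives a one-term sum, namely $C_{2,(\alpha-1,0)}(t)=t^{\alpha-1}C_2(t)^{\alpha}$. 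Hence
\[
C^2_{n,(\alpha,\beta),\rho}=[t^{n-\rho}]\,t^{\alpha-1}C_2(t)^{\alpha+\beta+\rho-1}=[t^{\,n-\alpha-\rho+1}]\,C_2(t)^{\alpha+\beta+\rho-1},
\]
and extracting this coefficient via $[t^m]C_2(t)^r=R_{2,r}(m)=\frac{r}{2m+r}\binom{2m+r}{m}$, then simplifying $2(n-\alpha-\rho+1)+(\alpha+\beta+\rho-1)=2n+\beta-\alpha-\rho+1$, yields exactly the stated formula.

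There is no genuine technical obstacle; the only point worth flagging explicitly is the reason $\rho>0$ comes out simpler than $\rho=0$. When $\rho>0$, the decomposition underlying Proposition~\ref{thm3: generating function by returns, alpha>0} produces the factor $C_{2,(\alpha-1,k-2)}(t)=C_{2,(\alpha-1,0)}(t)$, which for $k=2$ is a pure power of $C_2(t)$ up to a power of $t$, so the whole product is again such a power and its coefficient is a single Raney number; when $\rho=0$ the count is governed by $C^2_{n,(\alpha-1,\beta-1)}$, whose minimum height genuinely ranges over $0,\dots,\min(\alpha-1,\beta-1)$, leaving an honest sum. The remaining care is purely bookkeeping: tracking the exponent shifts and the binomial index, and checking degenerate small cases (e.g. $\alpha=1$, or $n<\alpha+\rho-1$) against the convention $\binom{a}{b}=0$ for $a<0$ or $b<0$ already adopted in the paper.
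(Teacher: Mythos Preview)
Your proposal is correct and follows essentially the same route as the paper: the $\rho=0$ case is handled by substituting $(\alpha-1,\beta-1)$ into Theorem~\ref{thm2: k=2 closed formula}, and the $\rho>0$ case proceeds from Proposition~\ref{thm3: generating function by returns, alpha>0} by collapsing $C_{2,(\alpha-1,0)}(t)$ to $t^{\alpha-1}C_2(t)^{\alpha}$ (you cite Theorem~\ref{thm2: k=2 generating function}, the paper cites the equivalent Proposition~\ref{thm2: k=2 horizontal reflection}) and then extracting a single Raney coefficient. Your explanation of why $\rho>0$ is simpler than $\rho=0$ is also on point.
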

\begin{proof}
The $\rho = 0$ case follows immediately from an application of Theorem \ref{thm2: k=2 closed formula} to Theorem \ref{thm3: closed formula by returns, alpha>0}.  For the $\rho > 0$ case, by Proposition \ref{thm2: k=2 horizontal reflection} we have $C^2_{n,(\alpha,\beta),\rho} = [t^n]t^\rho \kern+1pt C_{2,(\alpha-1,0)}(t) \kern+1pt C_2(t)^{\beta+\rho-1}$.  Using Proposition \ref{thm3: generating function by returns, alpha>0} then gives the following, to which we apply the definition of Raney numbers.
\begin{equation}
\label{eq3: closed formula by returns, corollary 2 proof 1}
C^2_{n,(\alpha,\beta),\rho} = [t^n] t^{\rho + \alpha - 1} C_{2,(0,\alpha-1)}(t) \kern+1pt C_2(t)^{\beta + \rho - 1} = [t^{n-\rho-\alpha+1}]C_2(t)^{\alpha+\beta+\rho-1}.
\end{equation}
\end{proof}

\section{Raised k-Dyck paths of bounded height}
\label{sec: Dyck paths of bounded height}

The results of Section \ref{sec: raised Dyck paths} may also be used to enumerate (raised) $k$-Dyck paths of bounded height.  This allows for a derivation of easily-computable generating functions that hold for all $k \geq 2$ and shapes $(\alpha,\beta)$, expanding upon the discussions of non-raised, height-bounded lattice paths in Baril and Prodinger \cite{BP}, Bousquet-M\'{e}lou \cite{Bousquet}, or Bacher \cite{Bacher}. 

So take any raised $k$-Dyck path $P \in \mathcal{D}^k_{n,(\alpha,\beta)}$.  If $P$ stays weakly below $y=M$, we say that $P$ is bounded from above by $M$.  We use $\mathcal{U}^{k,M}_{n,(\alpha,\beta)}$ to denote the collection of all $P \in \mathcal{D}^k_{n(\alpha,\beta)}$ that are bounded from above by $M$, and set $\vert \mathcal{U}^{k,M}_{n,(\alpha,\beta)} \vert = U^{k,M}_{n,(\alpha,\beta)}$.  Clearly, $U^{k,M}_{n,(\alpha,\beta)} = 0$ unless $\alpha,\beta \leq M$.

Fixing $0 \leq \alpha,\beta \leq M$, we define the generating function $U^M_{k,(\alpha,\beta)}(t) = \sum_{n \geq 0} U^{k,M}_{n,(\alpha,\beta)} t^n$.  The primary goal of this section is to relate the $U^M_{k,(\alpha,\beta)}(t)$ to the generating functions $C_{k,(\alpha',\beta')}(t)$ of Section \ref{sec: raised Dyck paths}, from which one may derive closed formulas for the $U^{k,M}_{n,(\alpha,\beta)}$ using Theorem \ref{thm2: closed formula}.

Before deriving a relationship for general $U^M_{k,(\alpha,\beta)}(t)$, we consider the special case of $\beta = M$:

\begin{lemma}
\label{thm4: bounded height paths lemma}
For any $k \geq 2$, $\alpha \geq 0$, and $M \geq 0$,

$$U^M_{k,(\alpha,M)}(t) = \frac{C_{k,(\alpha,M)}(t)}{1+C_{k,(M+1,M)}(t)}.$$
\end{lemma}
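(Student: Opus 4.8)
The plan is to set up a recursion for $U^M_{k,(\alpha,M)}(t)$ by decomposing a height-bounded path according to its \emph{first} return to the terminal height $y=M$ (or, more precisely, according to the first down step whose right endpoint lies at height $M$). Given $P \in \mathcal{U}^{k,M}_{n,(\alpha,M)}$, either $P$ never touches $y=M$ before its endpoint, or it does. In the first case, since $P$ ends at $y=M$ and its last step must be a $U$ step reaching $y=M$ for the first time, $P$ is essentially a path of shape $(\alpha, M-1)$ bounded above by $M-1$ followed by a single $U$; I would want to be careful here, because ``bounded above by $M$ but touching $y=M$ only at the final point'' is the relevant notion. In the second case, let the first visit to $y=M$ occur via a $D$ step landing at $(j,M)$; then $P$ splits as a prefix ending at $(j,M)$ that is bounded above by $M$ and touches $y=M$ only at its right end, concatenated with a (possibly empty) suffix of shape $(M,M)$ bounded above by $M$. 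The prefix, after deleting its terminal $D$ step, sits between $y=0$ and $y=M$ with a forced drop, i.e.\ it is counted by something like $C_{k,(\alpha,M-1)}$-type data restricted below $y=M$; this is where the bookkeeping gets delicate.

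A cleaner route, and the one I would actually pursue, is to avoid the height bound on the pieces entirely by using the standard ``first passage'' trick in reverse. Observe that a path of shape $(\alpha,M)$ that is \emph{not} required to be bounded above can be decomposed by looking at its \emph{last} step that goes strictly above... no — instead: decompose an \emph{unbounded} path $Q \in \mathcal{D}^k_{n,(M+1,M)}$ according to its first return to height $M$. A shape-$(M+1,M)$ path must come down from $M+1$ to $M$ at some point (its net displacement is $-1$, and since $D$ steps drop by $k-1 \ge 1$, it must cross level $M$ via a $D$ step from level $\ge M$ down to level $M$ at some first time). Splitting there and iterating gives the algebraic identity
$$C_{k,(M,M)}(t) = \frac{1}{1 + C_{k,(M+1,M)}(t)} \quad\text{(tentatively)},$$
or rather an identity of this flavor relating the ``excursion'' generating function at level $M$ to the ``one step below'' generating function. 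Then the main identity follows by writing any $P \in \mathcal{D}^k_{n,(\alpha,M)}$ bounded above by $M$ as: (prefix reaching $y=M$ for the first time, bounded above by $M$) followed by (an excursion at level $M$, bounded above by $M$), and recognizing the excursion generating function as $\frac{1}{1+C_{k,(M+1,M)}(t)}$ while the prefix part assembles into $C_{k,(\alpha,M)}(t)$ after summing over the internal structure below level $M$.

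The step I expect to be the main obstacle is justifying that the ``prefix up to first arrival at $y=M$'' pieces, summed over all possibilities, reconstitute exactly $C_{k,(\alpha,M)}(t)$ rather than some height-truncated analogue — in other words, showing that the height bound on $P$ is automatically inherited by each component of the decomposition and imposes no further constraint on the prefix beyond what is already encoded. The resolution should be: bounding $P$ above by $M$ is equivalent to bounding \emph{each} maximal piece between consecutive visits to $y=M$ (including the initial prefix) above by $M$; since each such piece, other than the prefix, is a shape-$(M,M)$ excursion, and an excursion from $M$ to $M$ that stays weakly below $M$ is forced to stay weakly below $M$ already only if... no, that is false, so one genuinely needs the height bound on the excursions, and the quotient $\frac{1}{1+C_{k,(M+1,M)}(t)}$ must be shown to equal the generating function for shape-$(M,M)$ paths bounded above by $M$. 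That sub-identity — equivalently $U^M_{k,(M,M)}(t)\bigl(1 + C_{k,(M+1,M)}(t)\bigr) = 1$ — is the combinatorial heart, and I would prove it by the first-return decomposition: a bounded excursion at level $M$ is either empty or begins $U\cdots D$ back to $M$ with the middle part an \emph{unbounded-below-but-bounded-above-by-}$M$ path from $M+1$... which again is governed by $C_{k,(M+1,M)}$ after the reflection identifying ``bounded above by $M$, based at $M+1$'' with an unbounded shape. The remaining steps are routine geometric-series manipulations.
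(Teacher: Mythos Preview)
Your proposal contains a genuine gap: the two identities you single out as the ``combinatorial heart'' are both false, and the decomposition you describe cannot produce them.

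First, the tentative identity $C_{k,(M,M)}(t) = \frac{1}{1+C_{k,(M+1,M)}(t)}$ fails already for $k=2$, $M=0$: the left side is $C_2(t)$ while the right side is $\frac{1}{C_2(t)}$. Likewise, your sharpened claim $U^M_{k,(M,M)}(t)\bigl(1+C_{k,(M+1,M)}(t)\bigr)=1$ is wrong; the correct right-hand side is $C_{k,(M,M)}(t)$, which is exactly the $\alpha=M$ case of the lemma you are trying to prove. Second, your first-return decomposition of a bounded excursion at level $M$ posits that it ``begins $U\cdots D$ back to $M$'', but a path starting at height $M$ and bounded above by $y=M$ cannot take a $U$ step at all---its first step must be $D$. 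So the ``middle part from $M+1$'' never arises, and there is no reflection that recovers $C_{k,(M+1,M)}$ from a piece that stays weakly below $M$ (for $k>2$ the step set is not symmetric under vertical reflection). Third, your hope that the first-arrival prefixes ``assemble into $C_{k,(\alpha,M)}(t)$'' cannot be right as stated: those prefixes are themselves height-bounded, so summing over them yields a bounded generating function, not the unbounded $C_{k,(\alpha,M)}(t)$.

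The paper's argument is the one you glanced at and then discarded. It decomposes an \emph{unbounded} path of shape $(\alpha,M)$ according to whether it ever rises above $y=M$. If it does, split at the \emph{first} $U$ step from height $M$ to $M+1$: the prefix is a path of shape $(\alpha,M)$ bounded above by $M$, and the suffix (after that $U$ step, which carries no $t$-weight) is an unrestricted path of shape $(M+1,M)$. This immediately gives
\[
C_{k,(\alpha,M)}(t) \;=\; U^M_{k,(\alpha,M)}(t) \;+\; U^M_{k,(\alpha,M)}(t)\,C_{k,(M+1,M)}(t),
\]
and solving for $U^M_{k,(\alpha,M)}(t)$ finishes the proof in one line. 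The point you were missing is that the decomposition should be applied to the unbounded ensemble so that the height constraint appears on exactly one factor.
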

\begin{proof}
Every path $P \in \mathcal{D}^k_{n,(\alpha,M)}$ may be decomposed in one of the two ways shown in Figure \ref{fig4: bounded height decomposition 1}, depending upon whether or not the path rises above $y = M$.  This prompts the identity

\begin{equation}
\label{eq4: bounded heigh paths lemma}
C_{k,(\alpha,M)}(t) = U^M_{k,(\alpha,M)}(t) + U^M_{k,(\alpha,M)}(t) \kern+1pt C_{k,(M+1,M)}(t).
\end{equation}
 
\end{proof}

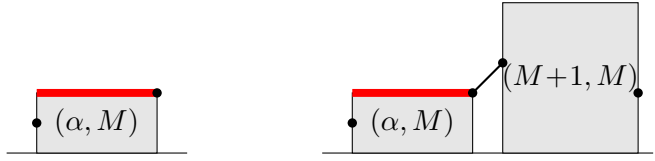
\begin{figure}[ht!]
\centering
\begin{tikzpicture}
	[scale=.4,auto=left,every node/.style={circle,fill=black,inner sep=0pt,outer sep=0pt}]
	\draw (-1,0) to (5,0);
	\draw[fill=gray!20!](0,0) rectangle (4,2);
	\draw[line width=3pt,color=red] (0,2) to (4,2);
	\node[fill=none] (b1) at (2,1) {\scalebox{1}{$(\alpha,M)$}};
	\node[inner sep=1.25pt] (a) at (0,1) {};
	\node[inner sep=1.25pt] (b) at (4,2) {};
\end{tikzpicture}
\hspace{.6in}
\begin{tikzpicture}
	[scale=.4,auto=left,every node/.style={circle,fill=black,inner sep=0pt,outer sep=0pt}]
	\draw (-1,0) to (10,0);
	\draw[fill=gray!20!](0,0) rectangle (4,2);
	\draw[line width=3pt,color=red] (0,2) to (4,2);
	\node[fill=none] (b1) at (2,1) {\scalebox{1}{$(\alpha,M)$}};
	\draw[thick] (4,2) to (5,3);
	\draw[fill=gray!20!](5,0) rectangle (9.5,5);	
	\node[fill=none] (b2) at (7.25,2.5) {\scalebox{1}{$(M \kern-2pt + \kern-2pt 1,M)$}};
	\node[inner sep=1.25pt] (a) at (0,1) {};
	\node[inner sep=1.25pt] (b) at (4,2) {};
	\node[inner sep=1.25pt] (b) at (5,3) {};
	\node[inner sep=1.25pt] (b) at (9.5,2) {};
\end{tikzpicture}
\caption{The two decompositions for a path $P \in \mathcal{D}^k_{n,(\alpha,M)}$, as used in the proof of Lemma \ref{thm4: bounded height paths lemma}.}
\label{fig4: bounded height decomposition 1}
\end{figure}

Lemma \ref{thm4: bounded height paths lemma} may still be applied to derive our general identity:

\begin{theorem}
\label{thm4: bounded height paths generating function}
For any $k \geq 2$, $M \geq 0$, and $0 \leq \alpha,\beta \leq M$,

$$U^M_{k,(\alpha,\beta)}(t) = C_{k,(\alpha,\beta)}(t) - \frac{C_{k,(\alpha,M)}(t) \kern+1pt C_{k,(M+1,\beta)}(t)}{1+C_{k,(M+1,M)}(t)}.$$
\end{theorem}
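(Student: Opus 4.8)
The plan is to run the same kind of first-passage argument as in Lemma~\ref{thm4: bounded height paths lemma}, but this time decomposing the \emph{unrestricted} count $C_{k,(\alpha,\beta)}(t)$ according to whether a path ever rises above height $M$, rather than decomposing the count of paths already bounded by $M$. First I would partition $\mathcal{D}^k_{n,(\alpha,\beta)}$ into $\mathcal{U}^{k,M}_{n,(\alpha,\beta)}$, the paths that stay weakly below $y=M$, together with its complement, the paths that attain height $y=M+1$ at some point.

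For a path $P$ in that complement, consider the first step of $P$ whose right endpoint sits at height $M+1$. Because $P$ starts at height $\alpha\le M$ and every $D$ step strictly lowers the height (as $1-k\le -1$), the only way to first reach height $M+1$ is through a $U$ step originating at a point of height exactly $M$. Hence $P$ splits as: a prefix from $(0,\alpha)$ to that height-$M$ point which, by minimality, stays weakly below $y=M$ and (being a subpath of an element of $\mathcal{D}^k_{n,(\alpha,\beta)}$) weakly above $y=0$ --- so it lies in $\mathcal{U}^{k,M}_{\,\cdot\,,(\alpha,M)}$ --- followed by the interpolating $U$ step, followed by a suffix from height $M+1$ to height $\beta$ whose only constraint is weak positivity, i.e.\ an arbitrary element of $\mathcal{D}^k_{\,\cdot\,,(M+1,\beta)}$. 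Conversely, any triple (such a prefix, a $U$ step, such a suffix) reassembles to a path in the complement, so this is a bijection (analogous to the right-hand decomposition in Figure~\ref{fig4: bounded height decomposition 1}). Since $t$ records only the number of $D$ steps, the interpolating $U$ step carries no factor of $t$, and the bijection gives
\begin{equation}
\label{eq4: bounded height theorem proof}
C_{k,(\alpha,\beta)}(t) \;=\; U^M_{k,(\alpha,\beta)}(t) \;+\; U^M_{k,(\alpha,M)}(t)\,C_{k,(M+1,\beta)}(t).
\end{equation}
Solving \eqref{eq4: bounded height theorem proof} for $U^M_{k,(\alpha,\beta)}(t)$ and inserting the value of $U^M_{k,(\alpha,M)}(t)$ supplied by Lemma~\ref{thm4: bounded height paths lemma} yields the stated identity; as a check, setting $\beta=M$ collapses the right-hand side back to $C_{k,(\alpha,M)}(t)/(1+C_{k,(M+1,M)}(t))$, recovering Lemma~\ref{thm4: bounded height paths lemma}.

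The point demanding the most care is confirming that the first-passage decomposition is a genuine bijection and not merely an inclusion. In particular I want to make sure the prefix is \emph{exactly} a shape-$(\alpha,M)$ path bounded above by $M$, including the degenerate case $\alpha=M$ in which the prefix is the trivial path and contributes the constant term of $U^M_{k,(\alpha,M)}(t)$, and that the suffix starting at height $M+1$ is truly unbounded above (it inherits no upper constraint, only $y\ge 0$, which is already built into $\mathcal{D}^k_{\,\cdot\,,(M+1,\beta)}$). Once \eqref{eq4: bounded height theorem proof} is in hand, the rest is routine algebra with the generating functions already defined, so I do not anticipate further obstacles.
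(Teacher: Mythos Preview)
Your proposal is correct and follows essentially the same approach as the paper: both decompose $C_{k,(\alpha,\beta)}(t)$ via a first-passage-above-$M$ argument to obtain the identity $C_{k,(\alpha,\beta)}(t) = U^M_{k,(\alpha,\beta)}(t) + U^M_{k,(\alpha,M)}(t)\,C_{k,(M+1,\beta)}(t)$, then solve for $U^M_{k,(\alpha,\beta)}(t)$ and invoke Lemma~\ref{thm4: bounded height paths lemma}. Your write-up is actually more explicit than the paper's about why the decomposition is a bijection (the role of the $U$ step, the degenerate $\alpha=M$ case, and the lack of an upper constraint on the suffix), but the underlying argument is identical.
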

\begin{proof}
Via an equivalent decomposition of paths $P \in \mathcal{D}^k_{n,(\alpha,\beta)}$ to that in Figure \ref{fig4: bounded height decomposition 1}, we have

\begin{equation}
\label{eq4: bounded height paths generating function 1}
C_{k,(\alpha,\beta)}(t) = U^M_{k,(\alpha,\beta)}(t) + U^M_{k,(\alpha,M)}(t) \kern+1pt C_{k,(M+1,\beta)}(t).
\end{equation}

Rearranging \eqref{eq4: bounded height paths generating function 1} and applying Lemma \ref{thm4: bounded height paths lemma} then gives
\begin{equation}
\label{eq4: bounded height paths generating function 2}
U^M_{k,(\alpha,\beta)}(t) = C_{k,(\alpha,\beta)}(t) - U^M_{k,(\alpha,M)}(t) \kern+1pt C_{k,(M+1,\beta)}(t) = C_{k,(\alpha,\beta)}(t) - \frac{C_{k,(\alpha,M)}(t) \kern+1pt C_{k,(M+1,\beta)}(t)}{1+C_{k,(M+1,M)}(t)}.
\end{equation}
\end{proof}

Recall that the order of $C_{k,(\alpha,\beta)}(t)$ goes to $\infty$ and $\alpha \rightarrow \infty$.  This implies that the order of $C_{k,(\alpha,M)}(t) \kern+1pt C_{k,(M+1,\beta)}(t)$ goes to $\infty$ as $M \rightarrow \infty$, and thus that the order of $\frac{C_{k,(\alpha,M)}(t) \kern+1pt C_{k,(M+1,\beta)}(t)}{1+C_{k,(M+1,M)}(t)}$ goes to $\infty$ as $M \rightarrow \infty$.  This allows us to conclude that number of initial terms for which $[t^n]U^M_{k,(\alpha,\beta)}(t) = [t^n]C_{k,(\alpha,\beta)}(t)$ goes to $\infty$ and $M \rightarrow \infty$, as one would expect for $k$-Dyck paths with an arbitrarily high upper bound.

Also observe that, if $M < k-1$, then we have both $M+1 \leq k-1$ and $\alpha,\beta \leq M$.  This means that we can apply Corollary \ref{thm2: closed formula corollary 2} to the rightmost term from Theorem \ref{thm4: bounded height paths generating function} as below:

\begin{equation}
\label{eq4: bounded by M < k-1 case}
\frac{C_{k,(\alpha,M)}(t) \kern+1pt C_{k,(M+1,\beta)}(t)}{1+C_{k,(M+1,M)}(t)} = \frac{C_k(t)^{M+1} \left( C_k(t)^{\beta+1} - 1 \right)}{C_k(t)^{M+1}} = C_k(t)^{\beta+1} - 1.
\end{equation}

When $M < k-1$ and $\alpha \leq \beta$, this gives the expected result of $U^M_{k,(\alpha,\beta)}(t) = C_k(t)^{\beta+1} - \left( C_k(t)^{\beta+1} - 1 \right) = 1$, corresponding to the fact that only the ``trivial" path (i.e.- the unique path with zero $D$ steps) stays weakly below $y = M$ when $M < k-1$.  When $M < k-1$ and $\alpha > \beta$, we similarly get the expected result of $U^M_{k,(\alpha,\beta)}(t) = 0$, reflecting the fact that every path of such a shape $(\alpha,\beta)$ must have at least one $D$ step and thus can't stay weakly below $y = M$.

Explicit calculations involving the generating function $U^M_{k,(\alpha,\beta)}(t)$ become increasingly difficult when $M \geq k-1$, but Theorem \ref{thm4: bounded height paths generating function} may always be used with with Theorem \ref{thm2: closed formula} to calculate the cardinalities $U^{k,M}_{n,(\alpha,\beta)} = [t^n] U^M_{k,(\alpha,\beta)}(t)$.  See Appendix \ref{sec: appendix} for explicit calculations of the sequences generated by the $U^M_{k,(\alpha,\beta)}(t)$ for various $k \geq 2$ and small $M$ in the case of $(\alpha,\beta) = (0,0)$.
 
For one final application, note that Theorem \ref{thm4: bounded height paths generating function} may be used to enumerate the number of raised $k$-Dyck paths that actually obtain a fixed maximum height.  This follows immediately from the fact that raised $k$-Dyck paths of maximum height $M$ are precisely those paths that stay weakly below $y = M$ yet fail to stay weakly below $y = M-1$.

So let $\mathcal{H}^{k,M}_{n,(\alpha,\beta)}$ denote the set of all $P \in \mathcal{D}^k_{n,(\alpha,\beta)}$ that obtain a maximum height of $M$, and let $\vert \mathcal{H}^{k,M}_{n,(\alpha,\beta)} \vert = H^{k,M}_{n,(\alpha,\beta)}$.  In terms of the generating function $H^M_{k,(\alpha,\beta)}(t) = \sum_{n \geq 0} H^{k,M}_{n,(\alpha,\beta)} t^n$, Theorem \ref{thm4: bounded height paths generating function} immediately yields the following result.

\begin{corollary}
\label{thm4: maximum height paths generating function}
For any $k \geq 2$, $M \geq 0$ and $0 \leq \alpha,\beta \leq M$,
$$H^M_{k,(\alpha,\beta)}(t) = U^M_{k,(\alpha,\beta)}(t) - U^{M-1}_{k,(\alpha,\beta)}(t) = \frac{C_{k,(\alpha,M-1)}(t) \kern+1pt C_{k,(M,\beta)}(t)}{1 + C_{k,(M,M-1)}(t)} - \frac{C_{k,(\alpha,M)}(t) \kern+1pt C_{k,(M+1,\beta)}(t)}{1 + C_{k,(M+1,M)}(t)}.$$

\end{corollary}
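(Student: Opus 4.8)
The plan is to establish the two equalities in turn; neither requires any combinatorics beyond what is already in place. For the first, $H^M_{k,(\alpha,\beta)}(t) = U^M_{k,(\alpha,\beta)}(t) - U^{M-1}_{k,(\alpha,\beta)}(t)$, I would argue purely at the level of sets: a path $P\in\mathcal D^k_{n,(\alpha,\beta)}$ attains maximum height exactly $M$ if and only if $P$ stays weakly below $y=M$ but not weakly below $y=M-1$, i.e. $\mathcal H^{k,M}_{n,(\alpha,\beta)} = \mathcal U^{k,M}_{n,(\alpha,\beta)}\setminus\mathcal U^{k,M-1}_{n,(\alpha,\beta)}$. Since a path bounded above by $M-1$ is a fortiori bounded above by $M$, there is a containment $\mathcal U^{k,M-1}_{n,(\alpha,\beta)}\subseteq\mathcal U^{k,M}_{n,(\alpha,\beta)}$, whence $H^{k,M}_{n,(\alpha,\beta)} = U^{k,M}_{n,(\alpha,\beta)} - U^{k,M-1}_{n,(\alpha,\beta)}$ for every $n$; multiplying by $t^n$ and summing over $n$ gives the first displayed identity. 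This is just the remark immediately preceding the statement, made precise.

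For the second equality I would substitute Theorem \ref{thm4: bounded height paths generating function} into each of the two terms just produced. With upper bound $M$ and with upper bound $M-1$ that theorem reads
\[ U^M_{k,(\alpha,\beta)}(t) = C_{k,(\alpha,\beta)}(t) - \frac{C_{k,(\alpha,M)}(t)\,C_{k,(M+1,\beta)}(t)}{1+C_{k,(M+1,M)}(t)}, \qquad U^{M-1}_{k,(\alpha,\beta)}(t) = C_{k,(\alpha,\beta)}(t) - \frac{C_{k,(\alpha,M-1)}(t)\,C_{k,(M,\beta)}(t)}{1+C_{k,(M,M-1)}(t)}. \]
Subtracting the second from the first, the two copies of $C_{k,(\alpha,\beta)}(t)$ cancel and exactly the claimed expression for $H^M_{k,(\alpha,\beta)}(t)$ remains. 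There are no analytic obstructions: because $\alpha>\beta$ forces $C_{k,(\alpha,\beta)}(t)$ to have strictly positive order, both $C_{k,(M+1,M)}(t)$ and $C_{k,(M,M-1)}(t)$ vanish at $t=0$, so $1+C_{k,(M+1,M)}(t)$ and $1+C_{k,(M,M-1)}(t)$ are units in $\Z[[t]]$ and the two quotients are genuine power series.

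The one point that genuinely needs care — and the only real obstacle — is the range of parameters. Theorem \ref{thm4: bounded height paths generating function} is stated for $0\le\alpha,\beta\le M$, so invoking it for $U^{M-1}$ is legitimate only when $0\le\alpha,\beta\le M-1$, which is precisely the range in which the resulting closed formula carries content. If $\alpha=M$ or $\beta=M$ (in particular if $M=0$), then no path of shape $(\alpha,\beta)$ can stay weakly below $y=M-1$, so $U^{M-1}_{k,(\alpha,\beta)}(t)=0$ and $H^M_{k,(\alpha,\beta)}(t)=U^M_{k,(\alpha,\beta)}(t)$; these degenerate cases should be recorded separately, or the closed formula restricted to $0\le\alpha,\beta\le M-1$. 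With that caveat dispatched, Corollary \ref{thm4: maximum height paths generating function} follows by straightforward expansion, and once the two equalities above are in hand there is nothing further to prove.
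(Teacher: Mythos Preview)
Your argument is correct and matches the paper's approach exactly: the paper states (without displaying a proof) that the corollary ``follows immediately'' from the set-theoretic identity $\mathcal H^{k,M}_{n,(\alpha,\beta)}=\mathcal U^{k,M}_{n,(\alpha,\beta)}\setminus\mathcal U^{k,M-1}_{n,(\alpha,\beta)}$ together with two applications of Theorem~\ref{thm4: bounded height paths generating function}. Your treatment is in fact more careful than the paper's, since you explicitly verify that the denominators are units and flag the boundary case $\alpha=M$ or $\beta=M$ (and $M=0$), which the paper does not address.
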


As with the $U^M_{k,(\alpha,\beta)}(t)$, the $H^M_{k,(\alpha,\beta)}(t)$ become increasing exhausting to calculate when $M$ becomes large.  For $M < k-1$, it is still easy to verify that we get the expected results of $H^M_{k,(\alpha,\beta)}(t) = 1$ when $\alpha \leq \beta$ and $H^M_{k,(\alpha,\beta)}(t) = 0$ when $\alpha > \beta$.  See Appendix \ref{sec: appendix} for explicit calculations of the sequences generated by the $H^M_{k,(\alpha,\beta)}(t)$ for various $k \geq 2$ and $M \geq k$ in the case of $(\alpha,\beta) = (0,0)$.

\newpage

\appendix

\section{Explicit Calculations}
\label{sec: appendix}

Below are comparisons of the sequences generated by $C_{n,(\alpha,\beta)}(t)$ to preexisting sequences on OEIS, for $k=2,3,4$.  All sequences were calculated on Maple 19 via \eqref{eq2: closed formula proof 2} from the proof Theorem \ref{thm2: closed formula}.  All listed sequences are identical up to shifting or the complete absence of (one or more) initial terms.

Notice how Proposition \ref{thm2: k=2 horizontal reflection} ensures that the $k=2$ table is symmetric along the main diagonal, whereas the $k=3,4$ tables are not symmetric along the main diagonal.  For all tables, Corollary \ref{thm2: closed formula corollary 2} ensures that all sequences with $\alpha \leq k-1$ correspond to convolutions of the $k-$Catalan numbers.

\begin{table}[ht!]
\centering
\def\arraystretch{1.3}
\scalebox{.9}{
\begin{tabular}{|c|c|c|c|c|c|c|c|c|}
\hline
 & \textbf{$\boldsymbol{\beta=0}$} & \textbf{$\boldsymbol{\beta=1}$} & \textbf{$\boldsymbol{\beta=2}$} & \textbf{$\boldsymbol{\beta=3}$} & \textbf{$\boldsymbol{\beta=4}$} & \textbf{$\boldsymbol{\beta=5}$} & \textbf{$\boldsymbol{\beta=6}$} & \textbf{$\boldsymbol{\beta=7}$} \\ \hline
\textbf{$\boldsymbol{\alpha=0}$} & \seqnum{A000108} & \seqnum{A000108} & \seqnum{A000245} & \seqnum{A002057} & \seqnum{A000340} & \seqnum{A003517} & \seqnum{A000588} & \seqnum{A003518} \\ \hline
\textbf{$\boldsymbol{\alpha=1}$} & \seqnum{A000108} & \seqnum{A000108} & \seqnum{A000245} & \seqnum{A002057} & \seqnum{A000340} & \seqnum{A003517} & \seqnum{A000588} & \seqnum{A003518} \\ \hline
\textbf{$\boldsymbol{\alpha=2}$} & \seqnum{A000245} & \seqnum{A000245} & \seqnum{A026012} & \seqnum{A026016} & \seqnum{A026013} & \seqnum{A026017} & \seqnum{A026014} & \seqnum{A026018} \\ \hline
\textbf{$\boldsymbol{\alpha=3}$} & \seqnum{A002057} & \seqnum{A002057} & \seqnum{A026016} & \seqnum{A026029} & \seqnum{A026026} & \seqnum{A026030} & \seqnum{A026027} & \seqnum{A026031} \\ \hline
\textbf{$\boldsymbol{\alpha=4}$} & \seqnum{A000340} & \seqnum{A000340} & \seqnum{A026013} & \seqnum{A026026} & -- & -- & -- & -- \\ \hline
\textbf{$\boldsymbol{\alpha=5}$} & \seqnum{A003517} & \seqnum{A003517} & \seqnum{A026017} & \seqnum{A026030} & -- & -- & -- & -- \\ \hline
\textbf{$\boldsymbol{\alpha=6}$} & \seqnum{A000588} & \seqnum{A000588} & \seqnum{A026014} & \seqnum{A026027} & -- & -- & -- & -- \\ \hline
\textbf{$\boldsymbol{\alpha=7}$} & \seqnum{A003518} & \seqnum{A003518} & \seqnum{A026018} & \seqnum{A026031} & -- & -- & -- & -- \\ \hline
\end{tabular}}
\caption{A comparison of the sequences generated by $C_{2,(\alpha,\beta)}(t)$ to preexisting sequences on OEIS.}
\label{table: k=2}
\end{table}

\begin{table}[ht!]
\centering
\def\arraystretch{1.3}
\scalebox{.9}{
\begin{tabular}{|c|c|c|c|c|c|c|}
\hline
 & \textbf{$\boldsymbol{\beta=0}$} & \textbf{$\boldsymbol{\beta=1}$} & \textbf{$\boldsymbol{\beta=2}$} & \textbf{$\boldsymbol{\beta=3}$} & \textbf{$\boldsymbol{\beta=4}$} & \textbf{$\boldsymbol{\beta=5}$} \\ \hline
\textbf{$\boldsymbol{\alpha=0}$} & \seqnum{A001764} & \seqnum{A006013} & \seqnum{A001764} & \seqnum{A006629} & \seqnum{A102893} & \seqnum{A006630} \\ \hline
\textbf{$\boldsymbol{\alpha=1}$} & \seqnum{A001764} & \seqnum{A006013} & \seqnum{A001764} & \seqnum{A006629} & \seqnum{A102893} & \seqnum{A006630} \\ \hline
\textbf{$\boldsymbol{\alpha=2}$} & \seqnum{A001764} & \seqnum{A006013} & \seqnum{A001764} & \seqnum{A006629} & \seqnum{A102893} & \seqnum{A006630} \\ \hline
\textbf{$\boldsymbol{\alpha=3}$} & \seqnum{A334680} & -- & \seqnum{A334680} & -- & -- & -- \\ \hline
\textbf{$\boldsymbol{\alpha=4}$} & \seqnum{A336945} & \seqnum{A030983} & \seqnum{A336945} & -- & -- & -- \\ \hline
\textbf{$\boldsymbol{\alpha=5}$} & \seqnum{A334976} & \seqnum{A334977} & \seqnum{A334976} & -- & -- & -- \\ \hline
\end{tabular}}
\caption{A comparison of the sequences generated by $C_{3,(\alpha,\beta)}(t)$ to preexisting sequences on OEIS.}
\label{table: k=3}
\end{table}

\begin{table}[ht!]
\centering
\def\arraystretch{1.3}
\scalebox{.9}{
\begin{tabular}{|c|c|c|c|c|c|c|}
\hline
 & \textbf{$\boldsymbol{\beta=0}$} & \textbf{$\boldsymbol{\beta=1}$} & \textbf{$\boldsymbol{\beta=2}$} & \textbf{$\boldsymbol{\beta=3}$} & \textbf{$\boldsymbol{\beta=4}$} & \textbf{$\boldsymbol{\beta=5}$} \\ \hline
\textbf{$\boldsymbol{\alpha=0}$} & \seqnum{A002293} & \seqnum{A069271} & \seqnum{A006632} & \seqnum{A002293} & \seqnum{A196678} & \seqnum{A006633} \\ \hline
\textbf{$\boldsymbol{\alpha=1}$} & \seqnum{A002293} & \seqnum{A069271} & \seqnum{A006632} & \seqnum{A002293} & \seqnum{A196678} & \seqnum{A006633} \\ \hline
\textbf{$\boldsymbol{\alpha=2}$} & \seqnum{A002293} & \seqnum{A069271} & \seqnum{A006632} & \seqnum{A002293} & \seqnum{A196678} & \seqnum{A006633} \\ \hline
\textbf{$\boldsymbol{\alpha=3}$} & \seqnum{A002293} & \seqnum{A069271} & \seqnum{A006632} & \seqnum{A002293} & \seqnum{A196678} & \seqnum{A006633} \\ \hline
\textbf{$\boldsymbol{\alpha=4}$} & \seqnum{A334682} & -- & -- & \seqnum{A334682} & -- & --\\ \hline
\textbf{$\boldsymbol{\alpha=5}$} & -- & \seqnum{A334608} & -- & -- & -- & --\\ \hline
\end{tabular}}
\caption{A comparison of the sequences generated by $C_{4,(\alpha,\beta)}(t)$ to preexisting sequences on OEIS.}
\label{table: k=4}
\end{table}

\newpage

\noindent Below are comparisons of the sequences generated by $U^M_{k,(0,0)}(t)$ to preexisting sequences on OEIS, for $k=2,3,4$.  All sequences were calculated on Maple 19 via Theorem \ref{thm4: bounded height paths generating function}, and are identical to the listed sequences up to shifting or the absence of (one or more) initial terms.

\begin{table}[ht!]
\centering
\def\arraystretch{1.3}
\scalebox{.9}{
\begin{tabular}{|c|c|c|c|}
\hline
 & \textbf{$\boldsymbol{k=2}$} & \textbf{$\boldsymbol{k=3}$} & \textbf{$\boldsymbol{k=4}$} \\ \hline
\textbf{$\boldsymbol{M=0}$} & $1$ & $1$ & $1$ \\ \hline
\textbf{$\boldsymbol{M=1}$} & $\lbrace 1 \rbrace$ & $1$ & $1$ \\ \hline
\textbf{$\boldsymbol{M=2}$} & $\lbrace 2^n \rbrace$ & $\lbrace 1 \rbrace$ & $1$ \\ \hline
\textbf{$\boldsymbol{M=3}$} & \seqnum{A001519} & $\lbrace 2^n \rbrace$ & $\lbrace 1 \rbrace$ \\ \hline
\textbf{$\boldsymbol{M=4}$} & \seqnum{A124302} & $\lbrace 3^n \rbrace$ & $\lbrace 2^n \rbrace$ \\ \hline
\textbf{$\boldsymbol{M=5}$} & \seqnum{A080937} & \seqnum{A001835} & $\lbrace 3^n \rbrace$ \\ \hline
\textbf{$\boldsymbol{M=6}$} & \seqnum{A024175} & \seqnum{A081704} & $\lbrace 4^n \rbrace$ \\ \hline
\textbf{$\boldsymbol{M=7}$} & \seqnum{A080938} & \seqnum{A083881} & \seqnum{A004253} \\ \hline
\textbf{$\boldsymbol{M=8}$} & \seqnum{A033191} & -- & -- \\ \hline
\textbf{$\boldsymbol{M=9}$} & \seqnum{A211216} & -- & \seqnum{A261399} \\ \hline
\textbf{$\boldsymbol{M=10}$} & -- & -- & \seqnum{A143648} \\ \hline
\textbf{$\boldsymbol{M=11}$} & -- & -- & -- \\ \hline
\textbf{$\boldsymbol{M=12}$} & -- & -- & -- \\ \hline
\end{tabular}}
\caption{A comparison of the sequences generated by $U^M_{k,(0,0)}(t)$ to preexisting sequences on OEIS.  An entry of $1$ (without braces) corresponds to the sequence $1,0,0,0,\hdots$. }
\label{table: bounded from above}
\end{table}

\end{document}